\definecolor{greennew}{rgb}{0.1
,0.4,0.1}
 \title{\bf{Branching random walk with infinite progeny mean: a tale of two 
tails}}
  \author{Souvik Ray}\thanks{Department of Statistics, Stanford University.  
souvikr@stanford.edu}
   \author{Rajat Subhra Hazra}\thanks{Leiden University, Netherlands and Indian Statistical Institute, Kolkata.  
rajatmaths@gmail.com}
    \author{Parthanil Roy}\thanks{Indian Statistical Institute, Bangalore.  
parthanil.roy@gmail.com}
    \author{ Philippe Soulier}\thanks{Universit\'e Paris-Nanterre.  
philippe.soulier@parisnanterre.fr}
\numberwithin{equation}{section}
\date{}
\newtheorem{theorem}{Theorem}[section]
\newtheorem{proposition}[theorem]{Proposition}
\newtheorem{corollary}[theorem]{Corollary}
\newtheorem{remark}[theorem]{Remark}
\newtheorem{lemma}[theorem]{Lemma}
\newtheorem{example}[theorem]{Example}
\newtheorem{assumption}[theorem]{Assumption}
\crefname{equation}{}{}
\crefname{theorem}{Theorem}{Theorems}
\crefname{assumption}{Assumption}{Assumptions}
\crefname{remark}{Remark}{Remarks}
\Crefname{lemma}{Lemma}{Lemmas}
\crefname{lemma}{Lemma}{Lemmas}
\crefname{enumi}{}{}
\begin{document}

\begin{abstract}
We study the extremes of branching random walks under the assumption that the
  underlying Galton-Watson tree has infinite progeny mean. It is assumed that the displacements are
  either regularly varying or they have lighter tails. In the regularly varying case, it is shown
  that the point process sequence of normalized extremes converges to a Poisson random measure. We study the asymptotics of the scaled position of the rightmost particle in the $n$-th generation when the tail of the displacement behaves like $\exp(-K(x))$, where either $K$ is a regularly varying function of index $r> 0$, or $K$ has an exponential growth. We identify the exact scaling of the maxima in all cases and show the existence of a non-trivial limit when $r> 1$.
\end{abstract}

\subjclass[2000]{Primary: 60J80, 05C81; secondary 60G70.}

\keywords{Branching random walk, Galton-Watson tree with infinite progeny mean, cloud Speed, Point processes,
extremes.}

\maketitle
\section{Introduction}
\label{sec:intro}

Branching random walk is a very important model in the context of statistical physics and
probability. The basic model is very simple and intuitive. It starts with a particle at the
origin. The particle splits into a random number of particles following a specified progeny
distribution and each new particle makes a random displacement on $\mathbb{R}$. The new particles
form the first generation. Each particle in the first generation splits into a random number of
particles according to the same law and independently of the past  as well as of the other particles
in the same generation. Each new particle makes a random displacement from the position of its
parent following the same displacement distribution, independently from other particles. The new
particles form the second generation. This mechanism  goes on. This resulting system is called a
branching random walk (BRW).

It is clear that the particles in the system described above naturally form a rooted Galton-Watson
tree if we forget about their positions. The progeny distribution of this branching process will be
denoted by $\left\{p_k \right\}_{k \geq 0}$, with $p_k := \mathbb{P}(Z_1 = k)$, where $Z_n$ denotes
the number of individuals at generation $n \geq 0$ with $Z_0\equiv 1$. This Galton-Watson tree
will be denoted by $\mathbb{T}=(V,E)$, where $V$ is the set of vertices of the tree and $E$ is the
collection of edges. The collection of particles or vertices at the $n$-th generation will be
denoted by $D_n$. 

We identify each edge $e_v$ of the Galton-Watson tree with its vertex $v$ away from the root; we then assign
a real-valued random variable $X_{e_v}$,  the displacement of the corresponding
particle. Our model implies that conditioned on the Galton-Watson tree $\mathbb{T}$,
$\{X_e: \, e\in E\}$ is a collection of i.i.d. random variables. Because of the underlying tree
structure, for each vertex $v$, there is a unique geodesic path connecting it to the root. We shall
denote the collection of all edges on this path by $I_v$. It is easy to see that the position of the
particle corresponding to the vertex $v$ is given for $v\in V$ by
$$ S_v := \sum_{e \in I_v} X_e.$$
The collection $\left\{S_v: v \in V \right\}$ is called the Branching Random Walk (BRW) induced by
the tree $\mathbb{T}=(V,E)$ and the displacements $\left\{X_e : e \in E \right\}$. The main focus of
the study of BRW is the study of the asymptotic behavior of $\left\{S_v : v \in D_n \right\}$ when
$n$ tends to $\infty$, or the behavior of functions such as the maximum displacement,
$M_n := \max_{v \in D_n} S_v$; the range of the displacements
$R_n := (\max_{v \in D_n} S_v - \min_{v \in D_n} S_v)$; order statistics or different gap
statistics, etc.

{\bf Literature review:}

The earliest works on branching random walks include \cite{Hammersley,Kingman,Biggins}. This model and its extreme value theory have now become very important because of
their connections to various probabilistic models (e.g., Gaussian free fields, conformal loop
ensembles, multiplicative cascades, tree polymers, etc.); see \cite{BramsonZeitouni,BerryReed, HuShi, aidekon, deywaymire}. Extremes of the branching random
walk with heavy-tailed displacement has been studied by \cite{durrett79, durrett83, gantert,
  bermal, maillard, abrshpr, bhattacharya2018large, bhatt2017}. The point process of displacements of a branching random walk with finite progeny mean is described
through a Cox-cluster process, rather implicitly in case of light-tailed displacements
(\cite{madaule}) and more explicitly in the heavy tailed set-up (\cite{bhatt2017}). The limiting
point process seems to have an universal stability structure, as was predicted by
\cite{brunetderrida}. For a detailed discussion of such stability properties, we refer the readers
to \cite{maillard:2013, subag2017extremal, bhattacharya2018note} and also refer to the exposition by
~\cite{shi2015} for a detailed background on the topic. Infinite mean branching processes and branching random walks (with infinite progeny mean) are intimately tied up with many scale free networks and hence important in study of random graphs. See for example the recent work of \cite{van2007distances,van2005distances,komjathy2019explosion, deijfen2013scale, van2017explosion} which explore the relationship of infinite mean branching process with various graph properties. The branching random walks considered in these random graph models live on $\mathbb Z^d$.

The main focus of this paper will be on the analysis of the behavior of the BRW when the progeny
distribution has infinite mean, i.e., $\mathbb{E}(Z_1) = \infty$. We would like to point out here the conditions in \cite{aidekon} under which the weak limit for the left-most position was computed. Although the conditions allowed for the progeny distribution to have infinite mean, they will fail to hold whenever the progeny variables have infinite mean and are independent from the tree structure, the foundational assumption of our analysis in this paper. In the branching process
literature, the asympototic behaviour of the number of particles in the $n$-th generation under
infinite mean was first studied in \cite{seneta1973, darling1970}. The conditions in
\cite{seneta1973} were later improved by  \cite{Davies, schuh:barbour, grey}. In this article we shall follow throughout
the sufficient conditions mentioned in \cite{Davies}.  It was shown in \cite{Davies} that if the progeny
distribution has a moment index (cf.~\cref{momentindex}) $\alpha\in(0,1)$, then
\begin{equation}
\label{daviesresultintro}
\alpha^n \log Z_n \stackrel{a.s}{\longrightarrow} W,
\end{equation}
where $W$ is a non-negative random variable, and almost surely positive on the event
of survival of the tree. In other words, in the infinite mean set-up, the generation size explodes
in a double-exponential manner if the tree survives. As a consequence we establish that in this case
the Galton-Watson tree, up to the $n$-th generation, has most of its particles in the last
generation, i.e., the total progeny up to the $(n-1)$-th generation is negligible when compared to
the number of particles at the $n$-th generation (see Lemma~\ref{conv1}). This presence of
a huge number of particles in the last generation, shows that most pairs of particles
in the last generation have very few common ancestors and therefore the dependence between their
displacements is very low. Consequently, it is expected that the behaviour of
$\left\{S_v : v \in D_n \right\}$ will be close to the behaviour of $Z_n$ many independent
realizations of the displacement random variable.  These heuristics will provide the correct results
when displacements are heavy tailed but not when the displacements are light tailed. We shall see
that in the case of distributions with tails decaying at infinity at a rate faster than the exponential distribution, contributions from other generations, and hence the appropriately scaled maxima converge to a non-trivial constant.

{\bf Main contributions:}

In Section~\ref{heavytail} we shall restrict our attention to the situation where the
  displacement distribution is almost surely non-negative and has a regularly varying tail 
  with index $-\beta$, i.e., if $F$ is the displacement distribution function, then
  \begin{equation}
    \label{def:RV}
    \lim_{x \to \infty} \dfrac{1-F(tx)}{1-F(x)} = t^{-\beta},\; \forall \, t >0,
  \end{equation}
  for some $\beta>0$. We denote the class of regularly varying functions with index $-\beta$ by $RV_{-\beta}$. To analyze the asymptotic behaviour of the $n$-th generation, as $n$ becomes
  large, we shall take the approach of point process theory.  We shall scale the positions of the
  particles in the $n$-th generation by $C_n:= F^{\leftarrow}(1-\frac{1}{Z_n})$. Throughout this paper, $F^{\leftarrow}$ will denote the (left-continuous) inverse of $F$, defined as $F^{\leftarrow}(y):=\inf \left\{ s : F(s) \geq y\right\}$, for all $y \in [0,1]$. Choice of this
  random scaling is inspired by the deterministic scaling used in \cite{abrshpr}. In
  Theorem~\ref{th:regvarmain}, we show that the point process converges to a Poisson random measure with
  intensity measure $\tau_\beta$, where $\tau_{\beta}([x,\infty))=x^{-\beta}$, for all $ x >0$.  This shows there is no
  clustering in the limit under the above scaling and the dependency structure gets camouflaged by
  the size of the last generation. An important consequence comes from Corollary~\ref{cor1}. We  show
  that the maximum displacement $M_n$ grows doubly exponentially conditioned on the survival of the tree, that is
  \begin{equation}
    \label{eq:intromax}
    \alpha^n \log M_n \overset{P}\rightarrow \frac{W}{\beta}.
  \end{equation}
 Here $\stackrel{P}{\longrightarrow}$ corresponds to convergence in probability. One can, in fact show almost sure convergence of the first $k$ order statistics in the log scale to the
  same limit, see \cref{aslim:regtail}. As announced, both tail indices come into play in these asymptotics. The point process
  result stated in \cref{th:regvarmain} can be used also to get various other order statistics of the displacement random
  variables.  The proof of the point process convergence relies on the one large jump principle
  which we use to show that the point process based on the scaled positions in the $n$-th generation
  is close (in an appropriate metric) to the point process based on the displacements in the last
  generation (see Lemma~\ref{conv1}) .

In \Cref{section:ltd} we consider the case when the right tail is no-longer regularly varying. We assume that tail of the displacement distribution is asymptotically like $\exp(-K(x))$ where $K(x)$ is regularly varying with index $r \in (0, \infty)$. Important examples of such distribution include Gaussian, exponential and Weibull random variables. Under some additional conditions on the left tail we show in Theorem~\ref{lighttailmain} that almost surely the following asymptotics is true:

\begin{equation}\label{eq:limitlight}
    \lim_{n\to\infty} \frac{M_n}{L(\log Z_n)}
   = \begin{cases} (1-\alpha^{\frac1{r-1}})^{\frac1r-1}, &\mbox{if } r > 1, \\
    1, & \mbox{if } 0<r \leq1. \end{cases}, 
\end{equation}
  where $L = K^{\leftarrow}$ is the left-continuous inverse of $K$. We would like to point out the change in behaviour of the maximum when $r>1$. When $r \leq 1$, it still happens that the resulting contribution comes from the last generation of the branching process but when $r>1$, there is contribution from all the generation and these contributions cannot be neglected. 
   Although this does not change the rate of growth for the maximum displacement, the effect is apparent in the limiting constant.

In  Section \ref{section:veryltd} we consider the case when the right tail decays much faster than those considered in  Section \ref{section:ltd}. In particular, we assume that $1-F(x)=\exp(-K(x))$ where $L:= K^{\leftarrow}$ is slowly varying at $\infty$. Under some additional technical conditions on the growth rate of $L$, we show in \cref{verylighttail:mainthm} that almost surely the following asymptotics hold true:
\begin{equation}\label{eq:limitverylight}
    \lim_{n\to\infty} \frac{M_n}{\sum_{k=1}^n L(\alpha^{-k})}
   =1.
\end{equation}
Unlike the previous cases, here each generation contributes equally to the right-most displacement in the last generation and the resulting maximum position $M_n$ has magnitude of strictly higher order than $L(\log Z_n)$, the magnitude of the largest displacement in the last generation. See \Cref{section:veryltd} for a detailed explanation of the phenomenon.

It is noteworthy that the results for the displacement in the Gumbel domain of attraction are not uniform, contrary to the i.i.d. case. Here, we observe different normalizations according to the tail, whereas for $n$ i.i.d. observations, it always hold that $M_n/L(\log n)$ converges to $1$ in probability, and almost surely under a very mild restriction, see  \cite[Theorem 5.4.5]{haan2006}.

The speed of a branching random walk can be defined in many ways. The cloud speed, burst speed and sustainable speed are some of the
possibilities. We refer to
\cite{peres} for definition and detailed discussions on these topics. The equivalence of these three notions of speed was established by \cite{Hammersley, Kingman,Biggins} under assumptions of
finite progeny mean and finiteness of the moment generating function. The later condition was
removed by \cite{gantert} where the tail of displacement random variables were assumed to follow a
semi-exponential distribution, which changed the rate of growth of the maxima. The definitions of the speeds was modified accordingly. In \Cref{sec:speed}, we define properly scaled versions of cloud, burst and sustainable speeds under \Cref{assumption:displacement_heavy}, \Cref{assumpF} and \Cref{verylighttail:assump}. We then establish the equivalence of these three notions of speeds in each of those cases.

%

We would now like to point out an aspect  which differentiates the nature of  the main results in \Cref{heavytail}, \Cref{section:ltd} and \Cref{section:veryltd}. In \Cref{heavytail}, \cref{cor1} provides us with an weak limit of properly scaled $M_n$ (as stated in \cref{probconv:reg}). This can be watered down to a version concerned with in-probability limit for $\log M_n$ after proper scaling, see \cref{reg:inprob}. Since the limit in \cref{reg:inprob} is non-degenerate, it can be viewed also as a weak limit. A similar phenomenon occurs in \Cref{lighttailmain} where we state an almost sure limit for properly scaled version of $M_n$ in \cref{eq:deterministic}, which again can be interpreted as an weak limit due to non-degeneracy of the limit. In both of these cases we take another logarithm and compute a degenerate almost sure limit which we refer to as the cloud speed.  In contrast to these, \cref{verylighttail:mainthm} immediately gives us a degenerate almost sure limit. In certain spirit, therefore, \cref{verylighttail:mainthm} provides us with the cloud speed when $F$ satisfies \cref{verylighttail:assump}. We do not get any weak limit here, although the proof technique is very similar to the one employed to prove \cref{lighttailmain}. 

\subsection*{Notation}
We use the notation $X_n\stackrel{a.s.}{\sim } a_n$ to indicate that $X_n/a_n\to 1$ almost surely. We also use the notion that $a_n$ grows at least exponentially to mean $\liminf_{n \to \infty} n^{-1}\log a_n >0.$ Similarly a sequence $a_n$ grows at least double-exponentially when $\liminf_{n \to \infty} n^{-1}\log \log a_n >0.$  Most of our results will be ``\emph{conditioned on the survival of the Galton-Watson tree}". In places where it is obvious we skip this phrase. Given the tree, we will denote $D_n$ to be the particles in the $n$-th generation. $e_v$ will denote the edge that connects particle or node $v$ to $p(v)$ where $p(v)$ will refer to the parent/immediate ancestor of the particle $v$. Also $C(v)$ will denote the set of all children of the particle $v$. Throughout the paper, the notations $\stackrel{a.s.}{\longrightarrow}, \stackrel{P}{\longrightarrow}$ and $\stackrel{D}{\longrightarrow}$ will stand for almost sure convergence, convergence in probability and weak convergence respectively. The notation $\mathbbm{1}(A)$ will denote the indicator function for the event $A$. $\text{Binomial}(n,p)$ and $\text{Poisson}(\lambda)$ will denote Binomial and Poisson distributions respectively with corresponding parameters. We also introduce the notation $G_{(\delta)}$ to denote the distribution function of $\lfloor Z^{\delta} \rfloor$ where $Z \sim G$, for any distribution function $G$ supported on non-negative integers.

\subsection*{Acknowledgement} The authors would like to thank Remco van der Hofstad for first
pointing out the article by \cite{Davies} and also many useful discussions throughout the
project. We also thank Ayan Bhattacharya for helpful comments during the progress of the article. A
significant portion of the research was carried out 
the visit of P.R.~to Universit\'{e} Paris Nanterre, and the
visit of P.S.~to Indian Statistical Institute, Bangalore. We acknowledge all of these institutions
for their support and hospitality. The research of R.S.H.~and P.R.~are both partially funded by
MATRICS Grants from the Science and Engineering Research Board, Govt.~of India. P.R.~is also
partially supported by a SwarnaJayanti Fellowship from the Department of Science and Technology,
Govt.~of India. 

We thank two anonymous referees for their helpful suggestions and comments. We also thank the referee for asking the question on the rapidly varying tails which this lead to some new results in Section \ref{section:veryltd}.

\section{Infinite progeny mean branching process}
\label{infmeanbp}
We shall first recall the main result of \cite{Davies} on the asymptotic properties of the
Galton-Watson tree under the assumption that the progeny mean is infinite. Throughout this paper,
$G$ will denote the distribution function of $Z_1$, the non-negative integer valued branching progeny and $\bar{G} = 1-G$ will denote its survival function. 
\begin{assumption} [{\bf Assumption on the branching random variable}]
  \label{davies_assump}
  There exists a function $\gamma : \mathbb{R}^{+} \to \mathbb{R}^{+}$ and a constant
  $ \alpha \in (0,1)$ such that,
  \begin{enumerate}[leftmargin=*,label=(D\arabic*)]
  \item\label{item:D1}$\gamma$ is non-increasing.
  \item\label{item:D2} $x \mapsto x^{\gamma(x)}$ is non-decreasing.
  \item\label{item:D3} $\int_{0}^{\infty} \gamma(e^{e^x})\;dx < \infty.$
  \item\label{item:D4} $\exists \; x_0 >0$, such that
    \begin{align*}
    x^{-\gamma(x)} \leq x^{\alpha} \bar{G}(x) \leq x^{\gamma(x)}, \;\; \forall \; x \geq x_0.
    \end{align*}
  \end{enumerate}
\end{assumption}
\begin{remark}{\label{momentindex}}
It is easily seen that $\alpha$ is the moment index of $G$, i.e.~$\mathbb{E}[Z_1^p]<\infty$ for
  all $p<\alpha$ and $\mathbb{E}[Z_1^p]=\infty$ for all $p>\alpha$.
Distribution functions with Pareto tails of the form $cx^{-\alpha}$ satisfy
  \cref{davies_assump}. More generally, if $\bar{G}(x)=x^{-\alpha}L(x)$ with $L$ slowly varying and
  either $(\log L(x))/\log(x)$ or $-(\log L(x))/\log(x)$ satisfying
  \cref{item:D1,item:D2,item:D3,item:D4}, then $G$ satisfies \cref{davies_assump}. However, regular
  variation of $\bar{G}$ is not implied by \cref{davies_assump}. It is also easily seen that if two
  distributions $G_1$, $G_2$ are tail equivalent (in the sense that there exist finite, positive
  constants $C_1$ and $C_2$ such that
  $C_1\bar G_1(x) \leq \bar G_2(x) \leq C_2\bar G_1(x)$ for large $x$) and one
  satisfies the conditions in \cref{davies_assump}, then so does the other one. 
  
  \end{remark}



We now quote the main result of \cite{Davies}. 
\begin{theorem}[Theorem 1, \cite{Davies}]
  \label{davies}
  Under \cref{davies_assump}, there exists a non degenerate, non-negative random variable
  $W$ such that
  \begin{equation}
    \label{eq:davies:conv}
    \alpha^n \log(Z_n +1) \stackrel{a.s.}{\longrightarrow} W. 
  \end{equation}
  Moreover, 
  $\mathbb{P}(W=0)=q$ where $q$ is the probability of extinction of the Galton-Watson tree.
\end{theorem}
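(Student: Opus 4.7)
The plan is to adapt the approximate-martingale approach. The heuristic, valid when $\alpha\in(0,1)$ is the moment index of $G$, is that a sum of $k$ i.i.d.~copies of $Z_1$ is of order $k^{1/\alpha}$ up to slowly varying corrections (the sum being dominated by the maximum once $\alpha<1$), so conditionally on $Z_n=k$ we expect $\log(Z_{n+1}+1) \approx \alpha^{-1}\log(Z_n+1)$. This suggests that $W_n := \alpha^n \log(Z_n+1)$ is nearly conserved, and the aim is to promote this heuristic to almost-sure convergence using \cref{item:D1,item:D2,item:D3,item:D4}.

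First I would establish a quantitative one-step estimate. Writing $Z_{n+1}=Y_1+\cdots+Y_{Z_n}$ with $Y_i$ i.i.d.~having distribution $G$, the two-sided tail bound \cref{item:D4} together with the regularity of $\gamma$ in \cref{item:D1,item:D2} controls both $\max_i Y_i$ and, since $\alpha<1$, the sum itself: a standard heavy-tail computation shows that, on an event of high conditional probability,
\[
\alpha \log(Z_{n+1}+1) = \log(Z_n+1) + R_n,
\]
where $|R_n| \lesssim \gamma(Z_n)\log(Z_n+1)$. Multiplying by $\alpha^n$ gives $W_{n+1}-W_n = \alpha^n R_n$ on the good event.

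Next, to upgrade to almost-sure convergence I would bootstrap. A crude first pass shows $\log(Z_n+1)$ grows at most like $C\alpha^{-n}$; plugging this bound into the error estimate reveals that $\alpha^n|R_n| \lesssim \gamma(Z_n)$ up to a finite multiplicative constant, and a change of variables $x=n\log(1/\alpha)$ identifies the tail sum $\sum_n \gamma(Z_n)$ as essentially a Riemann sum for the integral in \cref{item:D3}, hence almost surely finite. Combining this with a Borel--Cantelli argument to discard the exceptional events at each step yields the a.s.~Cauchy property of $(W_n)$ and hence the existence of the limit $W$.

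Finally, to identify $\{W=0\}$ with extinction, one inclusion is immediate: extinction forces $Z_n=0$ eventually, so $W_n=0$ and $W=0$. Conversely, on the event of survival, $Z_n\to\infty$ almost surely, and the lower bound $\bar{G}(x)\ge x^{-\alpha-\gamma(x)}$ in \cref{item:D4} propagates through the one-step recursion to force $\liminf_n W_n>0$ almost surely. The main obstacle throughout is the weakness of \cref{davies_assump}: it permits only polylogarithmic slack in the tail comparison, so the estimates driving the Borel--Cantelli step have essentially no room to spare, and the integrability condition \cref{item:D3} must be invoked sharply in order to absorb the accumulated errors without breaking the almost-sure statement.
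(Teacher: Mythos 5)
You should first note that the paper offers no proof of this statement at all: it is quoted verbatim as Theorem~1 of \cite{Davies} and used as an imported result (the only piece of Davies' machinery that resurfaces is his Laplace-transform bound, invoked in the appendix in the proof of \cref{strongheavy1}). So the comparison has to be made against Davies' original argument, not against anything in this paper. Measured that way, your outline follows essentially the same route: treat $W_n=\alpha^n\log(Z_n+1)$ as an approximate fixed point of the one-step recursion, control the one-step error by the slack function $\gamma$, and use \ref{item:D3} --- whose double exponential in the integrand is calibrated exactly to $\log Z_n\asymp\alpha^{-n}$ --- to sum the errors and obtain the a.s.\ Cauchy property, with the identification $\mathbb{P}(W=0)=q$ handled as you describe.

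Two places in the sketch are thinner than they can afford to be. First, the one-step error is not only the $\gamma$-term: even for exact Pareto tails (where $\gamma$ is negligible) the sum of $Z_n$ i.i.d.\ copies of $Z_1$ fluctuates around $Z_n^{1/\alpha}$ by a multiplicative factor that is merely tight in probability, so $R_n$ contains an additional $O(1)$ contribution whose \emph{tails} must be quantified for the Borel--Cantelli step; it is harmless only because $\alpha^n\cdot O(1)$ is geometrically summable, but it has to be bounded explicitly, and Davies does this via two-sided estimates on the Laplace transform of $G$ rather than via ``standard heavy-tail computations'' on the sum --- an important distinction, since \cref{davies_assump} does not give regular variation of $\bar G$ and the usual one-large-jump asymptotics are not directly available. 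Second, your bootstrap has a circularity to resolve: summability of the exceptional probabilities and of the errors $\gamma(Z_n)$ requires the doubly exponential growth of $Z_n$, which is what is being proved. This is fixable (a conditional Borel--Cantelli argument on the survival event, or a preliminary crude growth bound extracted from the lower bound in \ref{item:D4}), but as written the ``crude first pass'' is doing real work and needs to be spelled out; you should also say a word about why $W$ is non-degenerate on survival rather than a.s.\ constant there, which your sketch does not touch.
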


The convergence \cref{eq:davies:conv} shows that conditioning on the survival of the tree is
equivalent to conditioning on the event $W>0$,
that is, 
the events $\{\mathbb{T} \; \text{survives}\}$ and $\{W>0\}$ differ by an event of probability zero.

A consequence of \cref{davies} which will be used to prove our results is the following lemma which
tells that almost all the mass of the tree is concentrated in the last generation. To be more precise, total mass of the tree before the last generation is comparable to the mass of the last generation only in the log-scale.  
\begin{lemma}
  \label{conv1}
  Assume the progeny distribution satisfies \cref{davies_assump}. Then for any $s>0$,
  conditionally on survival ot $\mathbb{T}$,
  \begin{align*}
    \frac{1}{\log Z_n} \log \left( \sum_{i=0}^{n-1}  Z_i^s \right) & \stackrel{a.s.}{\longrightarrow} s \alpha \; .
  \end{align*}
\end{lemma}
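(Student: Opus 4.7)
The plan is to leverage Davies' theorem (Theorem~\ref{davies}) to get a sharp double-exponential asymptotic for $\log Z_n$, and then observe that this growth is so fast that the sum $\sum_{i=0}^{n-1} Z_i^s$ is dominated, on the log scale, by its last term $Z_{n-1}^s$.

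First, I would fix a sample point in the survival event. By Theorem~\ref{davies}, $\alpha^n\log(Z_n+1) \to W$ almost surely, and $W>0$ on survival. Since $Z_n\to\infty$ on survival, this gives $\log Z_n \sim W\alpha^{-n}$ a.s. on $\{W>0\}$. Consequently
\begin{equation*}
\frac{\log Z_{n-1}}{\log Z_n} \longrightarrow \alpha \quad \text{a.s. on survival,}
\end{equation*}
and more generally $\log Z_i / \log Z_n \to \alpha^{n-i}$ for each fixed $n-i$.

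Next, I would sandwich the sum. The lower bound is immediate: $\sum_{i=0}^{n-1} Z_i^s \geq Z_{n-1}^s$. For the matching upper bound, it suffices to observe that the sequence $(\log Z_i)_{i\geq 0}$ is, a.s.\ on survival, eventually strictly increasing (because $W\alpha^{-i}$ is) so that for $n$ large enough
\begin{equation*}
Z_{n-1}^s \;\leq\; \sum_{i=0}^{n-1} Z_i^s \;\leq\; n\, Z_{n-1}^s.
\end{equation*}
(If one prefers to avoid the eventual monotonicity argument, the trivial bound $\sum_{i=0}^{n-1} Z_i^s \leq n\max_{0\leq i\leq n-1} Z_i^s$ works just as well once one notices that, by the Davies asymptotics, $\log\max_{0\leq i\leq n-1}Z_i \sim W\alpha^{-(n-1)}$ a.s.)

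Taking logarithms, dividing by $\log Z_n$, and using $\log n/\log Z_n \to 0$ (since $\log Z_n$ grows like $\alpha^{-n}$) together with $\log Z_{n-1}/\log Z_n \to \alpha$, both the lower and upper bounds converge almost surely to $s\alpha$, proving the claim. There is no real obstacle here; the only subtle point is passing from the a.s.\ limit for $\alpha^n\log(Z_n+1)$ to the comparison $\log Z_i / \log Z_n \to \alpha^{n-i}$ on the survival event, which uses that $W>0$ a.s.\ on survival so the ratio is not of indeterminate form.
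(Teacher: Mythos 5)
Your proof is correct and follows essentially the same route as the paper's: both identify that, by Davies' theorem, $\log Z_i$ grows like $W\alpha^{-i}$ on survival, so the sum is bounded between its last term and $n$ times that term, and on the log scale the factor $n$ is negligible while $\log Z_{n-1}/\log Z_n \to \alpha$. The paper merely carries out the same idea with explicit $\varepsilon$-sandwich bounds on $Z_i$ rather than the asymptotic-equivalence phrasing you use.
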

\begin{proof}
  Take
  $\omega \in (W>0) \cap (\alpha^n\log(Z_n+1) \rightarrow W)$. Then we have
  $\alpha^n \log(Z_n(\omega)) \to W(\omega)$. Choose $\varepsilon >0$ and get $n_0 \in \mathbb{N}$
  such that
  \begin{equation}
    \label{eq:Z_nbound_1}
    \exp(\alpha^{-n}(1-\varepsilon)W(\omega)) \leq Z_n(\omega) \leq \exp(\alpha^{-n}(1+\varepsilon)W(\omega)),\;\; \forall \; n \geq n_0.  
  \end{equation}
  Now, for all $n > n_0$, using~\eqref{eq:Z_nbound_1} we have
  \begin{align}
    \frac{1}{ \log Z_n(\omega)} \log \sum_{i=n_0}^{n-1}  Z_i^s(\omega)
    &\leq  \frac{1}{\log Z_n(\omega)} \log \sum_{i=n_0}^{n-1}\exp(\alpha^{-i}(1+\varepsilon)sW(\omega))\nonumber \\
    & \leq \frac{1}{\log Z_n(\omega)} \log \left[n\exp(\alpha^{-(n-1)}(1+\varepsilon)sW(\omega)) \right] \nonumber\\
    &\leq \frac{\log n + \alpha^{-(n-1)}(1+\varepsilon)sW(\omega)}{\alpha^{-n}(1-\varepsilon)W(\omega)}
      \longrightarrow  s\alpha \frac{1+\varepsilon}{1-\varepsilon}\label{eq:Z_nbound_2},
  \end{align}
  where the last line is true since $W(w)>0$. Noting that $Z_n(\omega) \longrightarrow \infty$ and then
  taking $\varepsilon \downarrow 0$, we conclude that
  \begin{align*}
    \limsup_{n \longrightarrow \infty} \frac{1}{ \log Z_n(\omega)} \log \sum_{i=0}^{n-1}  Z_i^s(\omega) \leq s\alpha \; .
  \end{align*}
  An exactly similar argument shows the lower bound. This completes the proof. 
\end{proof}

 Both \cref{davies} and \cref{conv1} consider the a homogeneous branching process, i.e.~a branching process with identical progeny distribution over the generations. Indeed, in this article we shall restrict our attention to homogeneous branching processes only. But the techniques we shall apply to prove the main results in  Section \ref{section:veryltd} will require some results analogous to \cref{davies}, but in the context of a special kind of inhomogeneous branching trees. We shall explain the premise and state the result below while deferring the proof to   Section \ref{app:prooflems}. 

Consider the inhomogeneous  branching process starting with one particle at the $0$-th generation and where the particles of the $n$-th generation, for any $n \geq 0$,  produce i.i.d. number of off-springs, having distribution function $G_{n}$, independent of previous generations as well as the particles in the same generation. We assume the following assumption on the progeny sequence $\left\{G_n : n \geq 0\right\}$. 
   
\begin{assumption}{\label{assump:inhom}}
There exists $x_0 \in (0, \infty)$ and a sequence of positive real numbers $\left\{\alpha_n : n \geq 0\right\}$ in $(0,1)$, bounded away from $1$, such that for some $\gamma : \mathbb{R}^{+} \mapsto \mathbb{R}^+$ satisfying \cref{item:D1}, \cref{item:D2} and \cref{item:D3}, we have the following.
$$ x^{-\gamma(x)} \leq x^{\alpha_n} \bar{G}_n(x) \leq x^{\gamma(x)}, \;\; \forall \; x \geq x_0, \; \forall \; n \geq 0.$$
\end{assumption}

\begin{theorem}{\label{inhom}}
 Let $Z_n$ denote the size of the $n$-th generation for the inhomogeneous branching process with progeny sequence $\left\{G_n : n \geq 0\right\}$ satisfying \cref{assump:inhom}. Then there exists an event $E$ with $\mathbb{P}(E)>0$ and a positive random variable $W^*$ such that 
$$  \left(\prod_{j=0}^{n-1}  \alpha_j \right) \log (Z_n+1) \stackrel{a.s.}{\longrightarrow} W^{*}, \text{on } E.$$
\end{theorem}
\begin{remark}
\cref{davies} is clearly a special case of \cref{inhom} for the choice of the sequence $G_n =G$ and $\alpha_n=\alpha$ for all $n \geq 0$. We want to emphasize the fact that the proof of \cref{inhom} uses \cref{davies} and hence does not provide an independent proof of \cref{davies}.
\end{remark}

\begin{remark}
The proof of \cref{inhom} uses \cref{strongheavy1}, which is d proved in \cref{app:prooflems}. We would like to point out here that \cref{strongheavy1} will also be instrumental in proving the main result in \cref{section:ltd}, namely \cref{lighttailmain}.
\end{remark}



\section{BRW with regularly varying displacements}
\label{heavytail}
In this section we shall describe the extremes of the branching random walk when the displacement
variables associated with the edges are i.i.d.~with regularly varying tail. In this case, one can
derive the exact asymptotics of the point process of rescaled positions and show that the behaviour
is similar to an i.i.d.~set-up. When the progeny distribution has finite mean and satisfies the
Kesten-Stigum condition the point process behaviour was described in~\cite{bhatt2017}. The extremes
in such a set-up (with finite mean) with more general conditions were derived
in~\cite{durrett83}. We now extend the above results to infinite mean progeny distribution.
\begin{assumption}
  \label{assumption:displacement_heavy}
  Given the tree $\mathbb{T}=(V, E)$ we assume 
  \begin{enumerate}[leftmargin=*,label=(R\arabic*)]
  \item\label{item:R1} the displacement random variables 
    $\left\{X_e\right\}_{e \in E}$ are {i.i.d.} with distribution $F$.
  \item\label{item:R2} the displacements are non-negative with probability $1$.
  \item\label{item:R3} $1-F \in RV_{-\beta}$, for some $\beta >0$.
  \end{enumerate}
\end{assumption}
The assumption~\ref{item:R2} can be replaced by two sided tail-balance condition and this will not
effect the analysis which follows and to keep the presentation simple we will stick to non-negative
random variables. We will later see that such a generalization when the displacements are
light-tailed will require more efforts. Recall that $F^{\leftarrow}$ is the left-continuous inverse
of $F$. Let us now define the random scaling
\begin{equation}
  \label{eq:scaling_heavy}
  C_n 
  := F^{\leftarrow}\left(1-\frac{1}{Z_n}\right) =\left(\frac{1}{1-F}\right)^{\leftarrow}\left(Z_n\right).
\end{equation}
Let us consider the set $\mathcal{E} := (0, \infty]$, with the usual topology (obtained by the
one-point uncompactification of $[0,\infty]$).  Keeping in mind the notation for the BRW defined in
\cref{sec:intro}, we define,
\begin{align*}
  N_n = \sum_{v \in D_n} \delta_{C_n^{-1}S_v} = \sum_{v \in D_n} \delta_{C_n^{-1}\sum_{e \in I_v} X_e}, \; \forall \; n \geq 1.
\end{align*}
We consider point processes as random elements in the space $M_p(\mathcal E)$ of all Radon point
measures on a locally compact and separable metric space $\mathcal E$ (for this section
$\mathcal E=(0,\infty]$). Here $M_p(\mathcal E)$ is endowed with the vague convergence; for further
details on point processes, see \cite{Resnick, resnick:2007, kallenberg:1986}.  The following result
describes the asymptotic behavior of the point process $N_n$. In this paper, $PRM(\mu)$, for a measure $\mu$ on $\mathbb{R}$, will refer to the \textit{Poisson random measure} on $\mathbb{R}$ with mean measure $\mu$; see \cite[Section 3.3]{Resnick} for a detailed exposition on Poisson random measures.

\begin{theorem}
  \label{th:regvarmain}
  Under \cref{davies_assump,assumption:displacement_heavy} we have
  \begin{align*}
    N_n
    \stackrel{D}{\longrightarrow} N \sim PRM(\tau_{\beta}), \;\; \text{ conditioned on survival},
  \end{align*}
  where
  $\tau_{\beta}$ is the unique measure on $(0, \infty]$ such that
  $\tau_{\beta}((x, \infty])= x^{-\beta}$.
\end{theorem}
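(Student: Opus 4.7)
The plan is to reduce the point process convergence to the classical extreme value theory for i.i.d.\ regularly varying samples, by replacing each term $S_v/C_n$ with the single-edge contribution $X_{e_v}/C_n$. The underlying heuristic, as noted in the introduction, is a one-large-jump principle amplified by Davies' double-exponential growth: \cref{conv1} ensures that the tree has only $Z_n^{\alpha+o(1)}$ edges before generation $n$ while carrying $Z_n$ edges at generation $n$, and since $\alpha<1$ the ancestral contributions to $S_v$ vanish after rescaling by $C_n$.

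As a first step I introduce the auxiliary point process
\[
\tilde N_n := \sum_{v \in D_n} \delta_{X_{e_v}/C_n}.
\]
Conditionally on $\mathbb{T}$, the family $\{X_{e_v}\}_{v \in D_n}$ consists of $Z_n$ i.i.d.\ variables with distribution $F$, and $C_n = F^{\leftarrow}(1-1/Z_n)$ is precisely the canonical normalization for $Z_n$ such draws. Since $Z_n \to \infty$ almost surely on survival by \cref{davies}, the classical Poisson convergence for i.i.d.\ regularly varying variables (e.g.\ Proposition~3.21 of \cite{Resnick}) yields $\tilde N_n \stackrel{D}{\longrightarrow} PRM(\tau_\beta)$ conditionally on $\mathbb{T}$. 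Since the limit is deterministic, a conditioning argument on Laplace functionals lifts this to unconditional convergence on the event of survival.

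Next, I would show that $N_n$ and $\tilde N_n$ are asymptotically equivalent in $M_p((0,\infty])$. Writing $S_v = X_{e_v} + S_{p(v)}$ with $p(v) \in D_{n-1}$ and exploiting the nonnegativity of displacements (\ref{item:R2}), the key estimate is
\[
\epsilon_n := \frac{1}{C_n}\max_{u \in D_{n-1}} S_u \overset{P}{\longrightarrow} 0.
\]
The crude bound $\max_{u \in D_{n-1}} S_u \leq (n-1)\, M_n^{\ast}$, with $M_n^{\ast} := \max\{X_e : e \in E_{n-1}\}$ and $|E_{n-1}| = \sum_{i=1}^{n-1} Z_i$, reduces this to estimating the maximum of roughly $Z_n^{\alpha+o(1)}$ i.i.d.\ $F$-variables against $C_n$. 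As $t \mapsto F^{\leftarrow}(1-1/t)$ is regularly varying at infinity with index $1/\beta$ and $\log|E_{n-1}|/\log Z_n \to \alpha$ on survival by \cref{conv1}, a Potter-type bound gives $M_n^{\ast}/C_n \to 0$ in probability, the polynomial prefactor $n$ being harmless. Once $\epsilon_n \overset{P}{\to} 0$, a standard vague-convergence perturbation argument closes the proof: for any continuous nonnegative test function $f$ with compact support in $(0,\infty]$, uniform continuity of $f$ together with tightness of $\tilde N_n$ on a slightly enlarged support yield $|\tilde N_n(f) - N_n(f)| \overset{P}{\to} 0$, so $N_n$ and $\tilde N_n$ share the same Laplace functional limit.

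The main technical difficulty is the $\epsilon_n$ estimate: one needs to apply a Potter bound uniformly along the random index $|E_{n-1}|$, whose size is entangled with $Z_n$. The exponent $(\alpha-1)/\beta<0$ does beat any slowly varying correction, but one should first truncate on the event $\{\log|E_{n-1}|/\log Z_n \in [\alpha-\delta,\alpha+\delta]\}$ before applying the uniform regular-variation bound.
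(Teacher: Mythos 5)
Your proposal is correct and follows essentially the same route as the paper: the same auxiliary point process $\widetilde N_n=\sum_{v\in D_n}\delta_{C_n^{-1}X_{e_v}}$, the same appeal to Proposition~3.21 of \cite{Resnick} along the random subsequence $Z_n$, and the same use of \cref{conv1} with $s=1$ to kill the ancestral contributions. The only (cosmetic) difference is that you control $C_n^{-1}\max_{u\in D_{n-1}}S_u$ via $n\cdot\max_{e\in E_{n-1}}X_e$, whereas the paper works on the event that no pre-$n$ displacement exceeds $n^{-2}C_n$; both reduce to the same first-moment bound $|E_{n-1}|\,\bar F(\cdot\,C_n)\to 0$ driven by $\alpha<1$.
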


The above result verifies \cite{brunetderrida} conjectures in this setup with the limiting extremal
point process being a Poisson random measure with no clustering. The reason behind this
cluster-breaking phenomenon is that the extremes are governed by the last generation displacements
thanks to regular variation and \cref{conv1}, which will be the key ingredient in the proof.

\begin{proof}[Proof of \cref{th:regvarmain}]
  The proof will consist of two steps. Let us first define
  \begin{align*}
    \widetilde{N}_n:= \sum_{v \in D_n} \delta_{C_n^{-1}X_{e_v}}, \;\forall \; n \geq 1,
  \end{align*}
  where $e_v$ denotes the edge that connects particle or node $v$ to its parent, for
  $v \in D_n, \; n \geq 1$. In the first step, we will prove that 
  \begin{align}
    \label{eq:first-step}
     \widetilde{N}_n \stackrel{D}{\longrightarrow} PRM(\tau_{\beta}) \; , 
  \end{align}
   conditionally on survival.  Let $d_v$ be a metric which induces the topology of
  vague convergence.  Next we shall demonstrate that the point processes $N_n$ and $\widetilde{N}_n$
  are asymptotically close, i.e.,
  \begin{align}
    \label{eq:second-step}
    d_v(\widetilde{N}_n, N_n) \stackrel{P}{\longrightarrow} 0  \; , 
  \end{align}
   conditionally on survival.  \cref{th:regvarmain} then follows by applying
  \cite[Theorem~4.1]{billingsley:1968}.
\end{proof}
	
\begin{proof} [Proof of \cref{eq:first-step}]
  Consider $\left\{X_i\right\}_{i \geq 1} \stackrel{i.i.d.}{\sim} F$, independent of
  $\mathbb{T}$. We define $b_n = F^{\leftarrow}(1-1/n)$. Thus $C_n = b_{Z_n}$ and
  \begin{eqnarray*}
    \widetilde{N}_n = \sum_{v \in D_n} \delta_{C_n^{-1} X_{e_v}}\stackrel{d}{=} \sum_{j=1}^{Z_n} \delta_{C_n^{-1}X_j}= \sum_{j=1}^{Z_n} \delta_{b_{Z_n}^{-1}X_j}
  \end{eqnarray*}
  By \cite[Proposition 3.21]{Resnick} we have
  \begin{align*}
    \sum_{i=1}^n \delta_{b_n^{-1}X_i} \stackrel{D}{\longrightarrow} PRM(\tau_{\beta}), \;\; \text{as \;} n \to \infty.
  \end{align*}  
  We also have $Z_n \stackrel{a.s.}{\longrightarrow}\infty$, conditioned on survival with
  $\left\{Z_i\right\}_{i \geq 1}$ being independent to $\left\{X_i\right\}_{i \geq 1}$. Therefore,
  \begin{align*}
    \widetilde{N}_n = \sum_{j=1}^{Z_n} \delta_{b_{Z_n}^{-1}X_j} \stackrel{D}{\longrightarrow}
    PRM(\tau_{\beta}), \;\; \text{as \;} n \to \infty, \text{conditioned on survival}.
  \end{align*}
\end{proof}

\begin{proof}[Proof of \cref{eq:second-step}]
  Here $\mathbbm{1}(\cdot)$ denotes the indicator variable for the event inside the parenthesis and  $A$ denotes the event that $\mathbb{T}$ survives. Fix Lipschitz continuous function
  $g \in C_K^{+}(\mathcal{E})$ with $\operatorname{supp}(g) \subseteq (\delta, \infty]$. It is
  enough to show that for any $\varepsilon >0$,
  \begin{align*}
   \mathbb{P} \left[|N_n(g) - \widetilde{N}_n(g)| \geq \varepsilon \Big{|}A \right] \longrightarrow 0.
  \end{align*} 
  Define $ U_{n} = \sum_{k=1}^{n-1}\sum_{v \in D_k} \mathbbm{1}\{C_n^{-1}X_{e_v} > n^{-2}\}$.  On the event
  $\{U_n =0\}$, all the displacements occurred until the $(n-1)$-th generation are of size at
  most $C_n/n^2$. Therefore, for any $v \in D_n$, we have
  \begin{align*}
    C_n^{-1}S_v-C_n^{-1}X_{e_v} = \sum_{e \in I_v : e \neq e_v} C_n^{-1} X_e 
    \leq \sum_{e \in I_v : e \neq e_v} n^{-2} = (n-1)n^{-2} < 1/n.
  \end{align*}
  Two situations can arise on $\{U_n=0\}$: if $C_n^{-1}X_{e_v} \leq \delta/2$, and $n$ is large
  enough such that $\delta/2 > 1/n$, then $C_n^{-1}S_v< \delta$, and therefore
  $g(C_n^{-1}S_v)=g(C_n^{-1}X_{e_v})=0$. Otherwise, using the Lipschitz continuity of $g$, we can
  say that $|g(C_n^{-1}S_v)-g(C_n^{-1}X_{e_v})| \leq M/n$, for Lipschitz constant
  $M \in (0, \infty)$. Combining these two bounds we conclude that, on $\{U_n=0\}$, for large enough
  $n$,
  \begin{align*}
    |N_n(g) - \widetilde{N}_n(g)| \leq \sum_{v \in D_n} |g(C_n^{-1}S_v)-g(C_n^{-1}X_{e_v})| \leq
    \frac{M}{n} \sum_{v \in D_n} \mathbbm{1} \left\{ C_n^{-1}X_{e_v} > \delta/2\right\} = \frac{M}{n}
    \widetilde{N}_{n}\left( (\delta/2, \infty]\right).
  \end{align*}
  Hence, for any $\varepsilon>0$, and large enough $n$,
  \begin{align}
    \label{eq:twoterms}
    \mathbb{P} \left[|N_n(g) - \widetilde{N}_n(g)| \geq \varepsilon \Big{|}A \right] \leq \mathbb{P}
    \left[ \frac{M}{n}\widetilde{N}_{n}\left( (\delta/2, \infty]\right) \geq \varepsilon \Big{|}A
    \right] + \mathbb{P}\left(U_n \geq 1 \Big{|} A \right).
  \end{align}
  By \cref{eq:first-step},
  $\{\widetilde{N}_{n}( (\delta/2, \infty]) \}_{n \geq 1}$ is a tight family
  since the set $(\delta/2,\infty]$ is relatively compact in $\mathcal{E}$. This implies that the
  first term on the right hand side converges to $0$.  We now prove that the second term in the right of
  \cref{eq:twoterms} tends to 0.  For any $e \in E$, conditioned on $\mathbb{T}$,
  $\mathbbm{1} \{C_n^{-1}X_e > n^{-2}\}$ is a Bernoulli random variable with parameter
  $p_n:= \bar{F}(n^{-2}C_n)$. Hence,
  \begin{equation*}
    \mathbb{P}\left[U_n \geq 1 \mid \mathbb{T}\right] \leq \mathbb{E} \left[U_n \mid \mathbb{T} \right] = p_n \sum_{k=1}^{n-1} Z_k.   
  \end{equation*}
  By Assumption~\ref{item:R3} and applying \cite[Proposition~0.8(i),(v)]{Resnick} on the
  non-decreasing function $1/\bar{F} \in RV_{\beta}$, we have
  $ \log F^{\leftarrow}(1-1/x) = (1/\beta + o(1))\log x$, as $x \to \infty.$ Therefore, on $(W>0)$,
  $\log C_n = \log F^{\leftarrow}(1-1/Z_n) = (1/\beta + o_p(1))\log Z_n $, which implies
  $C_n/n^2 \to \infty$. Applying again \cite[Proposition~0.8(i)]{Resnick} on the function
  $\bar{F} \in RV_{-\beta}$, we obtain
  $\log p_n=\log \bar{F}(n^{-2}C_n) = (-\beta+o_p(1))\log (n^{-2}C_n),$ on $(W>0)$.  Combining the
  above mentioned asymptotics we get, on $W>0$,
  \begin{align*}
    \log \left(p_n \sum_{k=1}^{n-1}Z_k \right)
    & = \log p_n +  \log \sum_{k=1}^{n-1}Z_k =-(\beta+o_p(1))\log C_n +2(\beta+o_p(1)) \log n + \log \sum_{k=1}^{n-1} Z_k \\
    & =-(1+o_p(1))\log Z_n +2(\beta+o_p(1)) \log n + \log \sum_{k=1}^{n-1} Z_k \\
    & = -(1+o_p(1))\log Z_n +2(\beta+o_p(1)) \log n + (\alpha+o_p(1)) \log Z_n \to - \infty \;  
  \end{align*}
  as $\log Z_n$ grows exponentially on $(W>0)$ and $\alpha<1$. In the last line, we have used
  \cref{conv1} with $s=1$.  This concludes the proof.
\end{proof}

Some corollaries can easily
be derived from \cref{th:regvarmain} about the asymptotic behaviour of the ordered statistic of the
displacement of the particles in the $n$-th generation as $n \to \infty$.

\begin{corollary}
  \label{cor1}
  Fix $k \geq 1$. Let, $M_n^{(k)}$ be the $k$-th maximum value in the set
  $\left\{S_v : v \in D_n \right\}$. Then under assumptions of \cref{th:regvarmain}, we have
  \begin{align}{\label{probconv:reg}}
    \mathbb{P}(C_n^{-1}M_n^{(k)} \leq x) \to \exp(-x^{-\beta})\Big[\sum_{j=0}^{k-1} \frac{1}{j!}
    x^{-\beta j} \Big]
  \end{align}
  as $n \to \infty$ conditioned on the survival of the tree. Moreover,
  \begin{align}{\label{reg:inprob}}
    \alpha^n \log M_n^{(k)} \stackrel{P}{\longrightarrow} \frac{W}{\beta}, \;\; \text{conditioned on
    survival}.
  \end{align}
\end{corollary}

\begin{proof}
  For any $x>0$ and $k\ge 1$ we have from \cref{th:regvarmain}
  \begin{align*}
  \mathbb{P}(N_n ((x, \infty]) \leq k-1 | \mathbb{T} \; \text{survives}) \to
  \mathbb{P}(\mathrm{Poisson}(x^{-\beta})\leq k-1).
  \end{align*}
  Also, observe that $(N_n ((x, \infty]) \leq k-1)=(C_n^{-1}M_n^{(k)} \leq x)$ and
  \begin{align*}
  \mathbb{P}(\mathrm{Poi}(x^{-\beta})\leq k-1) =\exp(-x^{-\beta})\Big[\sum_{j=0}^{k-1} \frac{1}{j!}
  x^{-\beta j} \Big] = H(x)
  \end{align*}
  for some distribution function $H$ which gives mass on $(0,\infty)$. Therefore,
  \begin{equation}
    \label{eq:k-max}
    C_n^{-1}M_n^{(k)} \stackrel{D}{\longrightarrow} H, \;\;
    \text{conditioned on survival }.
  \end{equation}
  This proves the first part of the result. On taking logarithms on both sides of \eqref{eq:k-max}
  and multiplying by $\alpha^n$ we have
  \begin{equation}
    \label{correq}
    \alpha^n\log M_n^{(k)} -\alpha^n \log C_n \stackrel{P}{\longrightarrow} 0.
  \end{equation}
  Under Assumption~\ref{item:R3}, $(1/(1-F))^{\leftarrow} \in RV_{1/\beta}$. Using \cite[Proposition 0.8]{Resnick}, we have,
  \begin{align*}
    \frac{\alpha^n \log C_n}{\alpha^n \log Z_n} = \frac{\log C_n}{\log Z_n} = \frac{\log \left[(1/(1-F))^{\leftarrow} (Z_n)\right]}{\log Z_n}
    \stackrel{a.s.}{\longrightarrow} \frac{1}{\beta}, \;\; \text{conditioned on survival}.
  \end{align*}
  Finally using \cref{davies} we have
  \begin{align*}
    \alpha^n \log C_n \stackrel{a.s.}{\longrightarrow} \beta^{-1} W,
  \end{align*}
  conditioned on survival. Using~\eqref{correq} we have the desired result.
\end{proof}

\Cref{cor1} says that the rightmost particles of the BRW go away from the origin in a
double-exponential speed in this set-up. Such double-exponential growth was also observed only for
the rightmost particle in a related setup by \cite{van2017explosion}. Indeed we can improve the convergence in (\ref{probconv:reg}) to almost sure convergence as demonstrated by \cref{aslim:regtail}.
 \begin{theorem}
   \label{aslim:regtail}
  For any $k \in \mathbb{N}$, under the assumptions of \cref{th:regvarmain}, conditionally on the survival of the Galton-Watson
   tree $\mathbb{T}$,
   \begin{align*}
     \lim_{n \to \infty} \alpha^n  \log  M_n^{(k)} = \beta^{-1}W,  \; \text{almost surely}.
   \end{align*}
 \end{theorem}
 \begin{proof}[Proof of \cref{aslim:regtail}]
 We shall first prove the lower bound. Non-negativity of the displacement variables imply that $ M_n^{(k)}$ is greater than the  $k$-th maximum of the collection $\left\{X_{e_v} : v \in D_n \right\}$.  Fix any $\varepsilon >0$ and $\beta_1 > \beta$. On the event $(W > 2\varepsilon)$, We have the following for large enough $n$.
 \begin{align}
\log  \mathbb{P} \left( \alpha^n \log M_n^{(k)} \leq \beta_1^{-1}(W-2\varepsilon) \Big \rvert \mathbb{T} \right) & \leq \log  \mathbb{P} \left( \alpha^n \log \left( k\text{-th maximum of } \left\{X_{e_v} : v \in D_n \right\} \right) \leq \beta_1^{-1}(W-2\varepsilon) \Big \rvert \mathbb{T} \right) \nonumber  \\ 
& \leq  \log \left\{ {Z_n \choose Z_n-k+1 }\left[ F \left( \exp\left( \beta_1^{-1}(W-2\varepsilon) \alpha^{-n}\right)\right)\right]^{Z_n-k+1} \right\}  \nonumber \\
& \leq k \log Z_n + (Z_n-k+1) \log \left[ 1- \bar{F}\left( \exp\left( \beta_1^{-1}(W-2\varepsilon) \alpha^{-n}\right)\right) \right] \nonumber \\
& \leq k \log Z_n - (Z_n-k+1) \bar{F}\left( \exp\left( \beta_1^{-1}(W-2\varepsilon) \alpha^{-n}\right)\right) \nonumber  \\
& \leq k \log Z_n - C_1(Z_n-k+1)\exp \left( -(W-2\varepsilon)\alpha^{-n} \right) \label{aslw},
 \end{align}
 for some (non-random) constant $C_1 >0$. We have used Potter's bound \cite[Proposition~0.8(ii)]{Resnick} in (\ref{aslw}).  By \cref{davies}, we have, conditionally on survival of the tree, 
 $$ \alpha^n  \log \left[C_1(Z_n-k+1)\exp \left( -(W-2\varepsilon)\alpha^{-n} \right) - k \log Z_n \right] \stackrel{a.s.}{\longrightarrow} 2 \varepsilon,$$
 and hence
 $$  \mathbb{P} \left( \alpha^n \log M_n^{(k)} \leq \beta_1^{-1}(W-2\varepsilon) \Big \rvert \mathbb{T} \right) \leq \exp \left( - \exp \left( \varepsilon \alpha^{-n} \right) \right), $$
 for all $n \geq N_1(\varepsilon)$, where $N_1(\varepsilon)$ is finite almost surely. Applying First Borel-Cantelli Lemma and taking $\beta_1 \downarrow \beta$, we can conclude 
   $\liminf_{n \to \infty} \alpha^n \log M_n^{(k)} \geq \beta^{-1}(W-2\varepsilon)$ almost surely on the event $(W > 2 \varepsilon).$ Recalling that the event $(W>0)$ and the event of survival of the tree are same almost surely, we take $\varepsilon \downarrow 0$ and complete the proof of the lower bound.
  
It is enough to prove the upper bound for $k=1$. Observe that for fixed
   $\beta_2 \in (0,\beta)$ and $\varepsilon>0$, by Potter's bound \cite[Proposition~0.8(ii)]{Resnick}, we
   have for some constant $C_2>0$ and $n$ large enough,
   \begin{align*}
     \mathbb{P}\left(\alpha^n \log M_n^{(1)} \geq \beta_2^{-1}(W + 2\varepsilon) \Big \rvert  \mathbb{T}\right) 
     & \leq  \sum_{v \in D_n}  \mathbb{P}\left(S_v \geq \exp\left(  \beta_2^{-1}(W + 2\varepsilon) \alpha^{-n}\right) \Big \rvert \mathbb{T}\right)\\
     & \leq  Z_n n \overline{F}\left(\frac{1}{n} \exp\left(  \beta_2^{-1}(W + 2\varepsilon) \alpha^{-n}\right)  \right) \\
     & \leq  C_2 Z_n n^{\beta_2+1} \exp\left(-\alpha^{-n} (W+2\varepsilon)\right) \; .
   \end{align*}
    By \cref{davies}, we have, conditionally on survival of the tree, 
    $$ \alpha^n  \log \left[C_2 Z_n n^{\beta_2+1} \exp\left(-\alpha^{-n} (W+2\varepsilon)\right) \right] \stackrel{a.s.}{\longrightarrow} - 2 \varepsilon,$$
    and hence
    $$  \mathbb{P} \left( \alpha^n \log M_n^{(1)} \geq \beta_2^{-1}(W+2\varepsilon) \Big \rvert \mathbb{T} \right) \leq \exp \left( - \varepsilon \alpha^{-n}  \right), $$
    for all $n \geq N_2(\varepsilon)$, where $N_2(\varepsilon)$ is finite almost surely. Applying First Borel-Cantelli Lemma and taking $\beta_2 \uparrow \beta$, we  conclude  the proof.
 \end{proof}
 

\section{BRW with rapidly varying tails }
\label{section:ltd}


\begin{assumption}
  \label{assumpF}
  $F$ is a distribution function on $\mathbb{R}$ with the following properties.
  \begin{enumerate}[leftmargin=*,label=(F\arabic*)]
  \item \label{item:F1} $F(x)<1, \;\forall \; x \in \mathbb{R},$ and $K:=-\log(1-F)$ is regularly
    varying at $\infty$ with index $r \in (0,\infty)$.
  \item \label{item:F2} For some $M \in (0, \infty)$ and $\eta>0$,
    $F(-t) \leq M(\log |t|)^{-2-\eta}$, for $t$ large enough.
  \end{enumerate}
\end{assumption}
Examples of distributions satisfying \ref{item:F1} include the Weibull distributions with
$K(x) = cx^{r}$ and the Gaussian distribution with $K(x) \sim x^2/2$. Assumption \ref{item:F1} is
very close to implying that $F$ is in the domain of attraction for the maximum of the Gumbel
distribution, but some additional condition on $K$ is needed to ensure that property; see
e.g. \cite{cline:1986} for examples and counterexamples.

Let $L$ be the left-continuous inverse of $K$, that is
\begin{align}{\label{defL}}
  L(u) = \inf\{x:K(x)>u\} \; ,  \ \ u\in(0,1) \; .
\end{align}
Then $F^{\leftarrow} (x) = L(-\log(1-x))$, for all $0 <x <1$ and $L$ is regularly varying at
$\infty$ with index $1/r$; cf.  \cite[Proposition 0.8(v)]{Resnick}. By the regular variation of $L$ and
\cref{davies}, it is not hard to see the following formula (cf.~\cite[Proposition~0.8]{Resnick}), that for all $a>0$,
  \begin{align}
  \label{eq:argument-regvar-Z}
  \lim_{n\to\infty} \frac{L(a\log Z_n)}{L(\alpha^{-n})} & = a^{1/r}W^{1/r} \; , \ \ \textrm{almost surely conditioned on survival} \; .
\end{align}
  We now state our main result.

\begin{theorem}
  \label{lighttailmain}
  Let \cref{davies_assump,assumpF} hold. Then, almost surely conditioned on survival,
  \begin{align*}
    \lim_{n\to\infty} \frac{M_n}{L(\log Z_n)}
  & = \begin{cases} (1-\alpha^{\frac1{r-1}})^{\frac1r-1}, &\mbox{if } r > 1, \\
    1, & \mbox{if } 0<r \leq1. \end{cases}, 
  \end{align*}
\end{theorem}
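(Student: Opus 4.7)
The plan is to split according to the regime of $r$: for $r\le 1$ the displacement tails are subexponential-like so a single largest edge drives $M_n$; for $r>1$ all generations contribute jointly and the limit constant is of Hölder/variational type.

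\textbf{The case $r\le 1$.} I would sandwich $M_n$ around the maximum of the last-generation displacements. For the upper bound, union-bound over $v\in D_n$ and control $\mathbb{P}(\sum_{k=1}^n X_k > x)$ using subexponentiality of $F$ (for $r<1$) or a direct Erlang-type tail estimate (for $r=1$) to obtain
\[
\mathbb{P}\!\left(M_n>(1+\varepsilon)L(\log Z_n)\mid \mathbb{T}\right)\lesssim n\, Z_n\,\rme^{-K((1+\varepsilon)L(\log Z_n))}.
\]
Since $K(L(y))=y(1+o(1))$, $K\in RV_r$, and $\log Z_n$ grows double-exponentially by \cref{davies}, Borel--Cantelli gives $\limsup M_n/L(\log Z_n)\le 1$. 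For the lower bound, let $v^\star$ be the particle whose final edge maximizes the displacement in $E_n$. By extreme-value theory applied to $Z_n$ i.i.d.\ samples, $X_{e_n^{v^\star}}/L(\log Z_n)\to 1$ almost surely via~\eqref{eq:argument-regvar-Z}, while the remaining $n-1$ edges on the path contribute $O(n)$ almost surely (law of large numbers, using~\ref{item:F2} for the left tail), which is negligible since $L(\log Z_n)$ grows doubly exponentially. Hence $\liminf M_n/L(\log Z_n)\ge 1$.

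\textbf{The case $r>1$.} I expect the constant $(1-\alpha^{1/(r-1)})^{1/r-1}$ to arise from Hölder with weights $\alpha^{k/r}$ and conjugate exponents $r,\,r/(r-1)$: for any nonnegative profile,
\[
\sum_{k=0}^{n-1}x_k\le \Bigl(\sum_{k=0}^{n-1}\alpha^{k/(r-1)}\Bigr)^{(r-1)/r}\Bigl(\sum_{k=0}^{n-1}K(x_k)\alpha^{-k}\Bigr)^{1/r},
\]
with equality attained along the profile $x_k^\star\propto \alpha^{k/(r-1)}$ and the first factor tending to $(1-\alpha^{1/(r-1)})^{-(r-1)/r}$. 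The task then splits into (i) proving that any path realised in the tree satisfies the weighted budget $\sum K(x_k)\alpha^{-k}\le (1+o(1))\log Z_n$ (giving the upper bound on $M_n$) and (ii) exhibiting a path that attains the Hölder profile $x_k^\star$ (lower bound). For (ii) I would build the path recursively: at each level $k$ the current ancestor has almost-surely infinitely many children (since $EZ_1=\infty$), and a second-moment/Borel--Cantelli argument using \cref{davies} applied to each subtree produces a child-edge with displacement $\gtrsim x_k^\star$; note $x_k^\star$ is geometrically decreasing (since $\alpha<1$), consistent with the fact that subtree sizes concentrate double-exponentially deeper in the tree.

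\textbf{Main obstacle.} The upper bound for $r>1$ is by far the hardest part. A naive Chernoff--Cramér union bound over the $Z_n$ paths (treating them as i.i.d.\ sums $\sum_{k=1}^n X_k$) only yields $M_n\lesssim n^{1-1/r}L(\log Z_n)$, off by a factor $n^{1-1/r}$ from the claimed truth. Closing this gap requires genuinely exploiting the strong correlations between BRW paths: most of the $Z_n$ vertices at generation $n$ share long ancestral prefixes (lying in the subtree of a single root-child, then of a single grandchild, and so on along the ``backbone'' of the tree), so the effective number of independent path choices at depth $k$ is controlled by the Davies-scaled subtree size $\exp(\alpha^{-(n-k)}W_w)$ rather than by $Z_n$. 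Translating this correlation picture into the specific weight $\alpha^{-k}$ in the budget constraint, via a level-by-level conditional first-moment computation combined with the self-similar identity $W=\alpha\max_{c}W_c$ for the Davies limit, is the central technical step I would expect to devote the most work to.
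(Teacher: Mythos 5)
Your variational reading of the constant for $r>1$ --- H\"older with conjugate exponents $r$ and $r/(r-1)$ against the budget $\sum_k K(x_k)\alpha^{-k}\le(1+o(1))\log Z_n$ --- is correct and genuinely illuminating (it is the continuous shadow of the paper's discrete recursion $f_k^{r/(r-1)}=1+\sum_i\alpha^{i/(r-1)}$), but both pillars it rests on are left unproven, and one rests on a false premise. For the upper bound you correctly diagnose that the i.i.d.\ union bound over paths is off by $n^{1-1/r}$ and that the shared ancestral prefixes must be exploited, but you stop at announcing this as ``the central technical step'': that step is essentially the whole proof. The paper carries it out in \cref{upperlemma1} by induction on the number $k$ of final generations, partitioning the first generation by the \emph{rank} of the first-step displacement into blocks of size $|V_{n,1}|^{1-\delta_i}$, applying the induction hypothesis to each block's (doubly-exponentially large) subtree, and optimizing over partitions; the resulting recursion $\alpha_{m+1}=\alpha^{-1/r}\sup_{s}\{(1-s)^{1/r}\alpha_m+s^{1/r}\}$ is exactly your H\"older optimization made rigorous. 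For the lower bound, your recursive path construction starts from the claim that ``the current ancestor has almost-surely infinitely many children since $EZ_1=\infty$''; this is false --- $Z_1$ is a.s.\ finite, only its mean is infinite --- so a greedy single-path scheme cannot find, at each level, a child-edge with $K(x)\asymp c\log Z_n$ (that event has probability roughly $Z_n^{-c}$ per child). The paper instead propagates doubly-exponentially large \emph{sets} of candidate vertices $A_{n,n-k+i}$, thresholded by order statistics via \cref{normordernew} and \cref{strongheavy1}, which is what makes the optimal profile attainable.

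For $r\le1$ your upper bound also has a gap: the union bound over $v\in D_n$ needs $\mathbb{P}(S_n>x)\le C_n\,\mathrm{e}^{-(1-o(1))K(x)}$ uniformly in $n$ with $C_n$ growing at most singly exponentially. The elementary bound $n\bar F(x/n)=n\,\mathrm{e}^{-K(x/n)}$ only gives decay $\mathrm{e}^{-(1+o(1))n^{-r}K(x)}$, which does not beat the factor $Z_n$; a Kesten-type bound would suffice but requires subexponentiality, which \cref{assumpF} does not grant (the paper explicitly notes that $r=1$ includes the non-subexponential exponential law, and \ref{item:F1} alone does not force subexponentiality even for $r<1$; the two-sided support adds further friction). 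The paper avoids path-sum tail estimates entirely: it combines the crude per-generation bound \cref{prelimupper} with \cref{upperlemma1} applied to the last $k$ generations and lets $k\to\infty$, which handles all $r>0$ in one stroke. Your lower bound for $r\le1$ (last-generation maximum plus control of the ancestral path via \ref{item:F2}) does match the paper's construction in the degenerate case and is sound in spirit.
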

As an immediate corollary of \cref{lighttailmain} and \cref{eq:argument-regvar-Z}, we obtain
that almost surely conditioned on survival,
\begin{align}
  \label{eq:deterministic}
  \lim_{n\to\infty} \frac{M_n}{L(\alpha^{-n})} & = [(1-\alpha^{\frac1{r-1}})_{+}^{\frac1r-1} \vee 1]W^{\frac1r} \; ,
\end{align}
where $x_+$ denotes the non-negative part of $x$.

\paragraph{\bf Comments and examples} The case $r<1$ contains subexponential distributions in the
domain of attraction (for the maximum) of the Gumbel law. The heuristic properties of such
distributions is the ``single large jump principle'', which means that the sum of a finite number of
i.i.d. random variables having a subexponential distribution is large when exactly one of the
summands is large. Given this principle and the double exponential growths of the population, it is
expected that the maximum $M_n$ will be large only if one displacement of the last generation is
large. Thus \cref{lighttailmain} confirms the intuition in the case $r<1$. This is similar to
  the case of regularly varying displacements considered in \cref{th:regvarmain}. However, we could
  not obtain the convergence of the point process of exceedance in the present case, and we do not
  know if the same result (as in the case of regularly varying displacements) can be expected. The
case $r=1$ contains distributions which are subexponential (take for instance
$K(x)=x(\log x)^{-\beta}$ with $\beta>0$,
 \cite[cf. Example~1.4.3~(b)]{embrechts:kluppleberg:mikosch:1997}), and distributions which are not
subexponential such as the exponential distribution. However, we obtain the same result as in the
subexponential case.

The case $r>1$ is more intriguing. It contains distributions which are also in the domain of
attraction of the Gumbel law, but are not subexponential. Heuristically, this implies that a sum of
i.i.d.~random variables with such a distribution is large when many or all terms contribute
significantly to the sum. However, the double exponential growth of the population implies that the
displacements of the older generation contribute less. The form of the limit hints at exponential
smoothing. More precisely, if $K(x) \sim c x^r$, it can be shown, using results on the tail of the
sum of i.i.d.~random variables with such distribution, see
e.g. \cite[Theorem~1.1]{balkema:kluppelberg:resnick:1993} or \cite[Theorem~4.1]{asmussen:hashorva:laub:taimre:2017}, that the approximate
asymptotic behaviour of $M_n$ is the same as the behaviour of
\begin{align*}
  \max_{1 \leq j \leq Z_n}\sum_{i=1}^n \alpha^{\frac{n-i}{r}} X_i^{(j)}
\end{align*}
where $X_i^{(j)}$ are i.i.d.~and independent of $Z_n$.
For Gaussian displacements which corresponds to $K(x) \sim x^2/2$, we have 
$L(x)\sim \sqrt{2x}$ and \cref{lighttailmain} reads
\begin{align*}
  \lim_{n\to\infty} \frac{M_n}{\sqrt{\log Z_n}} = \sqrt{\frac{2}{1-\alpha}} \; , \ \ \mbox{ almost surely conditionally on survival,}
\end{align*}
and \cref{eq:deterministic} becomes
\begin{align*}
  \lim_{n\to\infty} \alpha^{n/2}M_n = \sqrt{\frac{2W}{1-\alpha}} \; , \ \ \mbox{ almost surely conditionally on survival.}
\end{align*}


We now turn to the proof of \cref{lighttailmain}, for which we will need the following lemma whose
proof is in \Cref{app:prooflems}.
\begin{lemma}
  \label{normordernew}
  Suppose that we have a filtration $\left\{\mathcal{F}_n : n \geq 0\right\}$ on the probability space $(\Omega, \mathcal{F}, \mathbb{P})$ and an $\mathcal{F}$-adapted non-negative integer-valued process $\left\{\zeta_n : n \geq  0\right\}$ such that $\liminf_{n \to \infty} n^{-1}\log \zeta_n >0$, almost surely. Further assume that for all $n \geq 1$, there exists a collection of random variables, $\left\{G_{n,i} : i \leq \zeta_{n}\right\}$ satisfying the following two conditions.
  \begin{enumerate}
  \item The collection $\left\{G_{n,i} : i \leq \zeta_{n}\right\}$ is $\mathcal{F}_{n+1}$-measurable.
  \item Conditioned on $\mathcal{F}_{n}$, $\left\{G_{n,i} : i \leq \zeta_{n}\right\}$ is a collection of i.i.d. variables with distribution
    function $F$ satisfying 
    $F(x)<1$ for all $x$ and $K(L(x)) \sim x$ as $x \to \infty$, where $K=-\log (1-F)$ and $L$ is the left-continuous inverse of $K$. Also assume that 
    $$ \lim_{t \to 1} \lim_{x \to \infty} \dfrac{L(tx)}{L(x)} =1.$$
  \end{enumerate}
   Fix  a sequence $\left\{\psi_n\right\}_{n \geq 1}$ in $(0,1)$, bounded away from $1$ and let $l_n$ be another $\mathcal{F}$-adapted non-negative integer valued process such that $\log l_n \stackrel{a.s.}{\sim} (1-\psi_n) \log \zeta_n$ and $\psi_n \log \zeta_n \stackrel{a.s.}{\rightarrow} \infty$, as $n \to \infty$.  Let $G_{i:\zeta_{n}}$ be the $i$-th largest element in
     $\left\{G_{n,1},\dots,G_{n,\zeta_{n}}\right\}$. Then,
     \begin{align*}
        \frac{G_{l_{n}:\zeta_{n}}}{L(\psi_n \log \zeta_{n})} \stackrel{a.s.}{\longrightarrow} 1, \; \text{ as } n \to \infty.\;
     \end{align*}
    The assertion is also true if we take $l_n=\psi_n=1$ for all $n \geq 1$.
  \end{lemma}
  
 \begin{remark}{\label{Fsat}}
If $F$ satisfies \cref{item:F1} then the assumptions on $F$, stated in \cref{normordernew}, holds true. By~\cite[Theorem 1.5.12]{bingham}, we have $K(L(x)) \sim x$ as $x \to \infty$, whereas $L(tx)/L(x) \to t^{1/r}$ as $x \to \infty$, since $L$ is regularly varying with index $1/r$.
\end{remark}

For i.i.d.~regularly varying random variables (i.e. when $G_n=G$ for all $n$), the next result is a stronger version of the law of iterated logarithm; see e.g. \cite{wichura:1972}. Since \cref{assump:inhom} does not imply regular
 variation and we deal with triangular arrays with random row sizes, we provide a proof  based on estimates
 of the Laplace transform of the distributions $G_n$ due to \cite{Davies} in Section \ref{app:prooflems}.
 \begin{lemma}
   \label{strongheavy1}
 Fix a sequence of distribution functions $\left\{G_n : n \geq 0\right\}$ satisfying
   \cref{assump:inhom} with moment index sequence $\left\{\alpha_n : n \geq 0\right\}$. Suppose that we have a filtration $\left\{\mathcal{F}_n : n \geq 0\right\}$ on the probability space $(\Omega, \mathcal{F}, \mathbb{P})$ and an $\mathcal{F}$-adapted positive integer-valued process $\left\{\zeta_n : n \geq  0\right\}$ such that $\liminf_{n \to \infty} n^{-1}\log \log \zeta_n >0$, almost surely.  Assume that for all $n \geq 0$, there exists a collection of random variables, $\left\{L_{n,i} : i \leq \zeta_{n}\right\}$ satisfying the following two conditions.
 \begin{enumerate}
 \item The collection $\left\{L_{n,i} : i \leq \zeta_{n}\right\}$ is $\mathcal{F}_{n+1}$-measurable.
 \item Conditioned on $\mathcal{F}_{n}$,  $\left\{L_{n,i} : i \leq \zeta_{n}\right\}$ is a collection of i.i.d. variables having distribution function $G_{n}$. 
 \end{enumerate}
  Then we can find a positive integer valued finite random variable $N$ such that 
  $$ 0 < \prod_{n \geq N+1}  \dfrac{\alpha_{n} \log \sum_{i=1}^{\zeta_{n}} L_{n,i} }{\log \zeta_{n}} < \infty, \; \text{ almost surely }.$$
  Consequently, \begin{align*}
     \frac{\alpha_{n} \log \sum_{i=1}^{\zeta_{n}} L_{n,i} }{\log \zeta_{n}} \stackrel{a.s.}{\longrightarrow} 1, \; \text{ as } n \to \infty.
   \end{align*}
 \end{lemma}

We now have all the necessary ingredients for the proof of \cref{lighttailmain}. We shall split the
proof into a lower and an upper bound. 

\begin{proof}[Proof of \cref{lighttailmain}, lower bound]
  The idea to obtain a lower bound is to truncate the tree at the $k$-th generation from the
    last one.  We obtain a bound for the displacements up to the generation $n-k$ of the form
    $\vartheta_k L(\log Z_n)$ with $\vartheta_k\to0$ as $k \to\infty$. The most delicate part of the
    proof is then to choose  subsets of vertices from each of the remaining generations such that all the vertices in those subsets are among the ones having largest displacements from their parents in their respective generations but still having  a path from the $(n-k)$-th generation through these subsets to the last generation. The first criterion forces us to choose the subsets as small as possible while the second one requires the subsets to be large. As a trade-off between them we get our required lower bound.

    
    To be
    more precise, fix an integer $k \geq 1$ and $\delta_0, \ldots, \delta_k \in (0,1)$. We define
  recursively a finite sequence of subsets of vertices from the tree. Set $B_{n,n-k} = D_{n-k}$. Let
  $A_{n,n-k}$ be the set of vertices in $D_{n-k}$ whose displacements from their parents are among
  the largest $\lfloor Z_{n-k}^{1-\delta_k} \rfloor $ many of those in $B_{n,n-k}$. For
  $i=1,\ldots,k$; let $B_{n,n-k+i}$ be the set of children of the vertices in $A_{n,n-k+i-1}$ and let
  $A_{n,n-k+i}$ be the set of vertices in $B_{n,n-k+i}$ whose displacements from their parents are
  among the largest $ \lfloor |B_{n,n-k+i}|^{1-\delta_{k-i}} \rfloor$ many of those in
  $B_{n,n-k+i}$. We always break the ties uniformly at random whenever some of the displacements are equal.

Our construction has been done in such a way that on the event $(|A_{n,n}| > 0)$, there exists at least one ray of
  vertices starting from $A_{n,n-k}$ and going through $A_{n,n-k+i}$ for all $0 \leq i \leq k$. Let
  $v^{*}_{n-k}$ be one such vertex in $A_{n,n-k}$ generating such a ray. Then the following crucial observation will be pivotal in proving our lower bound.
\begin{align}{\label{lowerboundcruc}}
  M_n \mathbbm{1}\left( |A_{n,n}| > 0\right)\geq \mathbbm{1}\left( |A_{n,n}| > 0\right)\max_{v \in A_{n,n}} S_v \geq \mathbbm{1}\left( |A_{n,n}| > 0\right) \left[S_{p(v_{n-k}^\ast)} +  \sum_{i=0}^k \min_{v \in A_{n,n-k+i} } X_{e_v} \right].
  \end{align}
The lower bound will then follow after establishing appropriate lower bounds for the two terms within bracket in the right hand side of \cref{lowerboundcruc}. All the almost sure statements, which appear in the following proof, are assumed to be conditioned on the survival of the tree.

We start by obtaining a  lower bound to the first term in the right hand side of \cref{lowerboundcruc}. Notice that if we condition on $\left\{Z_i : i \geq 1 \right\}$, independence of progeny and
  displacement variables guarantee that $p(v^{*}_{n-k})$, the parent of the vertex $v^{*}_{n-k}$, is
  a uniform random element from $D_{n-k-1}$ after conditioning and hence $S_{p(v^{*}_{n-k})}$ has distribution
  $F^{*(n-k-1)}$, where $F^{*l}$ denotes the $l$-fold convolution of $F$. Fix an arbitrary $\varepsilon >0$ and observe that for large enough $n$,
  \begin{align}
    \mathbb P \left( L(\alpha^{-n})^{-1} S_{p(v^\ast_{n-k})}  \le -\varepsilon \Big \vert \left\{Z_i \right\}_{i \geq 1}\right)
    & = F^{*(n-k-1)}(-\varepsilon L(\alpha^{-n})) \nonumber\\
    & \leq (n-k-1)F(-\varepsilon L(\alpha^{-n})/(n-k-1)) \label{union}\\
    & \leq nF(-\varepsilon L(\alpha^{-n})/n) \nonumber\\
    & \leq   Mn (\log \varepsilon + \log L(\alpha^{-n}) -\log  n)^{-2-\eta}  \leq M_1n^{-1-\eta}, \label{ineq2}
  \end{align} 
  for some finite positive constant $M_1$, depending on $\varepsilon, \alpha,r$ and $M$. The inequality~\cref{union} follows from a simple application of union bound, whereas the inequalities in~\cref{ineq2} follow from Assumption~\ref{item:F2} and the fact that $\log L(\alpha^{-n}) \sim -\frac{n}{r}\log\alpha$; see
  \cite[Proposition~0.8(i)]{Resnick}.  Since the final expression is summable and $\varepsilon$ is
  arbitrary, an application of the first Borel-Cantelli Lemma implies that $\liminf_{n \to \infty} L(\alpha^{-n})^{-1} S_{p(v_{n-k}^\ast)} \geq 0$, almost
  surely. 

As a first step to get a lower bound on the second term in the right hand side of (\ref{lowerboundcruc}), we make the following two claims.
\begin{enumerate}
\item $\liminf_{n \to \infty} \alpha^n \log |A_{n,n-k+i}|, \liminf_{n \to \infty} \alpha^n \log |B_{n,n-k+i}| >0$, almost surely, for all $0 \leq i \leq k$.
\medskip
\item  $\log |A_{n,n-k+i}| \stackrel{a.s.}{\sim} \alpha^{-1}(1-\delta_{k-i}) \log |A_{n,n-k+i-1}|$, for all $1 \leq i \leq k$.
\end{enumerate}
The first claim, in particular, implies that $|A_{n,n}| \stackrel{a.s.}{\longrightarrow} \infty$, leading to the following simplification of \cref{lowerboundcruc}.
\begin{equation}{\label{lowerboundcruc2}}
 \liminf_{n \to \infty} L(\alpha^{-n})^{-1} M_n =\liminf_{n \to \infty} L(\alpha^{-n})^{-1} \left[S_{p(v_{n-k}^\ast)} +  \sum_{i=0}^k \min_{v \in A_{n,n-k+i} } X_{e_v} \right].
\end{equation}
The proof of the claims will be carried out by an induction on $i$. Note that, 
 \begin{equation}{\label{induc1}}
 \alpha^n \log |A_{n,n-k}|  \stackrel{a.s.}{\sim} \alpha^n (1-\delta_k) \log |B_{n,n-k}| = \alpha^n (1-\delta_k) \log Z_{n-k} \stackrel{a.s.}{\sim} \alpha^k (1-\delta_k)W >0.
 \end{equation} 
 This establishes the first claim for $i=0$. The vertices
  in $A_{n,n-k}$ produce i.i.d.~numbers off-springs (independent of their current positions) having
  distribution $G$ and their sum (i.e., the total number of children) is $|B_{n,n-k+1}|$; thus \cref{strongheavy1} gives
  $\log |B_{n,n-k+1}| \stackrel{a.s.}{\sim} (1/\alpha) \log |A_{n,n-k}|.$ Here we have applied \cref{strongheavy1} with $\mathcal{F}_{n}$ being the $\sigma$-algebra which contains all the information (tree structure and displacement) of the tree upto generation $(n-k)$, $\zeta_n = |A_{n,n-k}|$ and $G_n \equiv G$. The validity of the two conditions needed to apply \cref{strongheavy1} is guaranteed by (\ref{induc1}) and the independence of displacements in one generation of the tree from the information of previous generations. Finally, the identity
  $|A_{n,n-k+1}| = \lfloor |B_{n,n-k+1}|^{1-\delta_{k-1}} \rfloor $ yields that
  $\log |A_{n,n-k+1}| \stackrel{a.s.}{\sim} \alpha^{-1}(1-\delta_{k-1}) \log |A_{n,n-k}|$. This proves the induction
  hypothesis for the base case $i=1$. The proof of the veracity of the induction hypothesis for
  $i=l+1$ assuming it is true for $i=l$ goes exactly along the same lines. 



  
  Using the claim established just above, we can conclude that
  \begin{align}
    \label{eq:logAnk}
    \alpha^n \log |A_{n,n-k+i}| \to \alpha^{k-i} W \prod_{j=0}^i (1-\delta_{k-j})  \; , 
  \end{align}
  almost surely, 
   for all $0 \leq i \leq k$.
In particular, $|A_{n,n-k+i}|$ and hence $|B_{n,n-k+i}|$ grows doubly-exponentially almost surely.

  Now observe that by construction $\min_{v \in A_{n,n-k+i} } X_{e_v}$ is the
  $\lfloor |B_{n,n-k+i}|^{1-\delta_{k-i}} \rfloor$-th largest element in the collection
  $\left\{ X_{e_v} : v \in B_{n,n-k+i}\right\}$. Also notice that conditioned on the information of the tree upto generation $(n-k+i)$, the collection  $\left\{ X_{e_v} : v \in B_{n,n-k+i}\right\}$ is just an i.i.d.~sample from distribution function $F$. Hence, using \cref{normordernew} and the fact
  that $|B_{n,n-k+i}| $ grows doubly-exponentially almost surely, we obtain the following for $0 \leq i \leq k$.
  \begin{align*}
    \liminf_{n \longrightarrow \infty} \frac{\min_{v \in A_{n,n-k+i}  } X_{e_v}}{L( \delta_{k-i} \log |B_{n,n-k+i}|)} \geq 1 \; , 
    \ \ \text{almost surely}. 
  \end{align*}
  Here we have applied \cref{normordernew} with $\mathcal{F}_{n}$ being the $\sigma$-algebra which contains all the information (tree structure and displacement) of the tree upto generation $(n-k+i)$, $\zeta_n = |B_{n,n-k+i}|$ and $\psi_n \equiv \delta_{k-i}.$
  Again using the fact $|A_{n,n-k+i}| = \lfloor |B_{n,n-k+i}|^{1-\delta_{k-i}} \rfloor$ and
  regularly variation of $L$, we obtain for $0 \leq i \leq k$ the following.
  \begin{align}
    \label{eq:liminfprod}
    \liminf_{n \longrightarrow \infty} \frac{\min_{v \in A_{n,n-k+i} } X_{e_v}}{L( \log
    |A_{n,n-k+i}|)} \geq \left(\frac{\delta_{k-i}}{1-\delta_{k-i}}\right)^{1/r}, \;\text{almost
    surely}. 
  \end{align}
  
  By \cref{eq:logAnk}, \cref{eq:davies:conv} and the regular variation of $L$ and the Uniform
  Convergence Theorem for regularly varying functions, we have
  \begin{align}
    \label{eq:limregvar}
    \lim_{n\to\infty} \frac{ L(\log |A_{n,n-k+i}|)}{L(\alpha^{-n})} = \left(\alpha^{k-i} \prod_{j=0}^i (1-\delta_{k-j}) W\right)^{1/r}.
  \end{align}
  The limits \cref{eq:liminfprod,eq:limregvar} yield, for $i=0,\dots,k$, 
  \begin{align*}
    \liminf_{n \to \infty} \frac{ \min_{v \in A_{n,n-k+i} } X_{e_v}}{L(\alpha^{-n})} \geq
    \left( \alpha^{k-i} \frac{\delta_{k-i}}{1-\delta_{k-i}}\prod_{j=0}^i (1-\delta_{k-j}) W  \right)^{1/r} \; .
  \end{align*}
  Combining all the previous limit estimates for the terms in right hand side of (\ref{lowerboundcruc2}) and
  using \cref{eq:argument-regvar-Z},
  we obtain
  \begin{align*}
    \liminf_{n \longrightarrow \infty} \frac{M_n}{L(\log Z_n)} \geq \sum_{i=0}^k \left( \alpha^{k-i}
    \frac{\delta_{k-i}}{1-\delta_{k-i}}\prod_{j=0}^i (1-\delta_{k-j}) \right)^{1/r}.
  \end{align*}
  Maximizing over the right hand side we conclude that
  \begin{align*}
  \liminf_{n \longrightarrow \infty} \frac{M_n}{L(\log Z_n)} \geq f_k := \sup \left\{ \sum_{i=0}^k
    \left( \alpha^{k-i} \frac{\delta_{k-i}}{1-\delta_{k-i}}\prod_{j=0}^i (1-\delta_{k-j})
    \right)^{1/r} \; \Big \vert \; \delta_0, \ldots, \delta_k \in (0,1) \right\}.
  \end{align*}
  In order to compute $f_k$, we define, for $k\geq1$ and
  $\delta_0, \ldots, \delta_{k} \in (0,1)$,
  \begin{align*}
    h_{k}(\delta_0, \ldots, \delta_{k-1})
    & := \sum_{i=0}^k \left( \alpha^{k-i}  \frac{\delta_{k-i}}{1-\delta_{k-i}}\prod_{j=0}^i (1-\delta_{k-j}) \right)^{1/r} 
      = \sum_{i=0}^k \left( \alpha^{i} \delta_{i}\prod_{q=i+1}^{k} (1-\delta_{q}) \right)^{1/r} \; , \\
    f_k & = \sup \left\{h_k(\delta_0, \ldots, \delta_k) : \delta_0, \ldots, \delta_k \in (0,1) \right\}
  \end{align*}
  Taking $\delta_0 \uparrow 1$ and $\delta_i \downarrow 0$ for $i\geq1$ yields $f_k\geq1$.  Furthermore,
  $$h_k(\delta_0, \ldots, \delta_k) = \left(\alpha^k \delta_k\right)^{1/r} + {(1-\delta_k)}^{1/r}
  h_{k-1}(\delta_0, \ldots, \delta_{k-1}),$$ and hence we have for all $k \geq 1$,
  \begin{align*}
    f_k 
    &= \sup \left\{\alpha^{k/r}\delta_k^{1/r} + ({1-\delta_k})^{1/r} h_{k-1}(\delta_0, \ldots, \delta_{k-1}) :
      \delta_0, \ldots, \delta_k \in (0,1) \right\} \\
    &=\sup \left\{\alpha^{k/r}\delta_k^{1/r}  + ({1-\delta_k})^{1/r} f_{k-1} :  \delta_k \in (0,1) \right\} \\
    & =\begin{cases} \left(\alpha^{k/(r-1)}+f_{k-1}^{r/(r-1)}\right)^{1-1/r}, &\mbox{if } r > 1, \\
      \max \left\{ \alpha^{k/r}, f_{k-1}\right\}, & \mbox{if } 0<r \leq 1. \end{cases}  
  \end{align*}  
  For $r>1$, opening the recursion we get
  $f_{k}^{r/(r-1)} = f_0^{r/(r-1)} + \sum_{i=1}^k \alpha^{i/(r-1)}.$ Having observed that
  $ f_0 = \sup \left\{ \delta_0^{1/r} : \delta_0 \in (0,1)\right\} =1$, we conclude
  \begin{align*}
    f_k^{r/(r-1)} = \begin{cases} \sum_{i=0}^k \alpha^{i/(r-1)}, &\mbox{if } r > 1, \\  1, & \mbox{if } 0<r \leq 1. \end{cases}.
  \end{align*}
Letting $k$ tend to $\infty$, we therefore conclude that almost surely,
  \begin{align*}
    \liminf_{n \longrightarrow \infty} \frac{M_n}{L(\log Z_n)} & \geq \begin{cases} (1-\alpha^{1/(r-1)})^{\frac1r-1}, &\mbox{if } r > 1, \\
      1, & \mbox{if } 0<r \leq 1. \end{cases} \\
          &  =  (1-\alpha^{1/(r-1)})_{+}^{\frac1r-1}\vee1 \; .
  \end{align*}
  This proves the lower bound. 
\end{proof}
We now proceed to prove the upper bound. We first establish a preliminary non sharp upper bound
which will be used to obtain the sharp one. This preliminary bound is basically obtained by summing over maximum displacement from the parent in each generation.
\begin{lemma}
  Under the assumptions of \cref{lighttailmain}, there exists an almost surely finite random
  variable~$A$ such that
  \begin{equation}
    \label{prelimupper}
    \limsup_{n \to \infty} \frac{M_n}{L(\log Z_n)} \leq  \frac{AW^{1/r}}{1-\alpha^{1/r}}, \;\text{almost surely}.
  \end{equation}
\end{lemma}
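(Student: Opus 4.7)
The plan is to dominate $M_n$ by the sum across generations of the largest single-step displacement, and then to control the resulting sum as a geometric-type series via the regular variation of $L$. The starting observation is that for any $v \in D_n$, writing $v_i$ for its ancestor in generation $i$, $S_v = \sum_{i=1}^n X_{e_{v_i}}$, so exchanging the maximum with the sum yields
\begin{align*}
M_n \leq \sum_{i=1}^n M_i^\ast, \qquad \text{where } M_i^\ast := \max_{w \in D_i} X_{e_w}.
\end{align*}

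Next I would control each $M_i^\ast$ via a Borel--Cantelli argument. Conditionally on $\mathbb{T}$, the collection $\{X_{e_w} : w \in D_i\}$ is i.i.d.~$F$ of size $Z_i$. The regular variation of $K$ with index $r$ gives $K(BL(u)) \sim B^r u$ as $u \to \infty$, so for any $B$ with $B^r > 2$ the union bound yields, for $i$ large,
\begin{align*}
\mathbb{P}(M_i^\ast > BL(\log Z_i) \mid \mathbb{T}) \leq Z_i \, \bar F(BL(\log Z_i)) \leq Z_i^{-1/2}.
\end{align*}
Since $Z_i$ grows doubly-exponentially on survival (\cref{davies}), the right-hand side is almost surely summable, and the conditional first Borel--Cantelli lemma gives $M_i^\ast \leq BL(\log Z_i)$ for all but finitely many $i$, almost surely on survival. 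Absorbing the finitely many exceptional generations into an a.s.-finite random constant $A_1$ produces $M_n \leq A_1 + B\sum_{i=1}^n L(\log Z_i)$.

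The final step is to evaluate the right-hand side asymptotically. By \cref{eq:argument-regvar-Z}, $L(\log Z_i) \stackrel{a.s.}{\sim} W^{1/r} L(\alpha^{-i})$, so there is an a.s.-finite random $A_2$ with $L(\log Z_i) \leq A_2 W^{1/r} L(\alpha^{-i})$ for every $i$. Fixing a small $\varepsilon > 0$, Potter's bound \cite[Proposition~0.8]{Resnick} applied to $L \in RV_{1/r}$ gives a deterministic $C_\varepsilon$ with $L(\alpha^{-i}) \leq C_\varepsilon \alpha^{(n-i)(1/r-\varepsilon)} L(\alpha^{-n})$ for $i \leq n$ with $\alpha^{-i}$ large, and summing the resulting geometric series yields $\sum_{i=1}^n L(\alpha^{-i}) \leq C'_\varepsilon L(\alpha^{-n})/(1 - \alpha^{1/r - \varepsilon})$. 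Combining with the preceding bound and dividing by $L(\log Z_n) \stackrel{a.s.}{\sim} W^{1/r} L(\alpha^{-n})$ produces a bound of the claimed form with $\alpha^{1/r}$ replaced by $\alpha^{1/r-\varepsilon}$, which I would close by letting $\varepsilon \downarrow 0$.

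The main delicacy is the Borel--Cantelli step, which requires a tail estimate for $M_i^\ast$ that is uniform in large $i$ despite the randomness of $Z_i$; conditioning on the whole tree $\mathbb{T}$ to decouple the i.i.d.~displacements from the branching structure is the natural remedy. A secondary subtlety is that Potter's bound and the asymptotic equivalence in \cref{eq:argument-regvar-Z} both require $i$ large, so contributions from the finitely many small-$i$ generations must be absorbed into the a.s.-finite constant $A_1$; since those finitely many terms sum to a finite random quantity, this is harmless.
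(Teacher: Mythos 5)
Your proposal is correct and follows essentially the same route as the paper: bound $M_n$ by the sum over generations of the largest single-step displacement, control each such maximum by $\mathrm{const}\cdot L(\log Z_i)$ via a union bound and conditional Borel--Cantelli, and then sum the resulting regularly varying sequence geometrically. The only (harmless) difference is that you use Potter's bound with an $\varepsilon$-loss to sum $\sum_i L(\alpha^{-i})$, whereas the paper invokes its Lemma~\ref{regulargeom} for the exact limit $(1-\alpha^{1/r})^{-1}$ --- immaterial here since the statement only requires some almost surely finite $A$.
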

\begin{proof}
  Introduce the notation
  \begin{align*}
    \widetilde M_{Z_i}:= \max_{v\in D_i} X_{e_v}.
  \end{align*}
  Note that $M_n\le \sum_{i=1}^n \widetilde M_{Z_i}$. Since $K(L(x)) \geq x$, for all $x$, we get
  \begin{align*}
    \mathbb{P} \left[ \widetilde{M}_{Z_i} > L((1+\varepsilon)\log Z_i)\Big \vert \mathbb{T}\right]
    & = 1- \left(F \circ L((1+\varepsilon)\log Z_i)\right)^{Z_i} \\
    &= 1- \left(1- \exp(-K\circ L((1+\varepsilon)\log Z_i))\right)^{Z_i} \\
    & \leq 1- \left(1- \exp(-(1+\varepsilon)\log Z_i)\right)^{Z_i} \\
    & = 1- \left(1-  Z_i^{-1-\varepsilon}\right)^{Z_i} \leq Z_i^{-\varepsilon} \; .
  \end{align*}
  Since the last expression is summable, we have by Borel-Cantelli Lemma, 
  almost surely conditioned on survival,
 \begin{equation}\label{eq:upperbdM-tildeM1}
     M_n\le \sum_{i=1}^n \widetilde M_{Z_i} \le \sum_{i=1}^n L((1+\varepsilon)\log Z_i)+ \sum_{i=1}^{\infty} \left( \widetilde{M}_{Z_i} \vee 0 \right) \mathbbm{1} \left(  \widetilde{M}_{Z_i} > L((1+\varepsilon)\log Z_i)\right), 
  \end{equation}
  where the second expression in the right hand side of (\ref{eq:upperbdM-tildeM1}) is finite almost surely. 
  By \cref{eq:argument-regvar-Z},
  there exists a random finite number $A$ such that
  $L((1+\varepsilon)\log Z_i) \leq AL(\alpha^{-i})$ for all $i\geq1$, almost surely. Therefore,
  almost surely conditionally on survival,
  \begin{align*}
    \limsup_{n \to \infty} \frac{M_n}{L(\alpha^{-n})}
    & \leq  \limsup_{n \to \infty} \frac1{L(\alpha^{-n})} \sum_{i=1}^n  L((1+\varepsilon)\log Z_i)  \\
    & \le A \limsup_{n \to \infty} \frac1{L(\alpha^{-n})} \sum_{i=1}^{ n} L(\alpha^{-i})  =   \frac{A}{1-\alpha^{1/r}} \; ,
  \end{align*}
  where the last assertion follows from \cref{regulargeom}.  By \cref{davies} and
  \cref{eq:argument-regvar-Z}, this proves \cref{prelimupper}.
\end{proof}

Before diving into the main part of the proof for the upper bound, let us try to explain in a few words how does the proof work. We first fix some $k \geq 1$ and cut the tree at generation $(n-k)$. We use the preliminary bound obtained from \cref{prelimupper} for the displacements until generation $(n-k)$. The upper bound on the displacements occurred in the last $k$ generations is then obtained from Lemma~\ref{upperlemma1}, which gives an upper bound to the maximum position in the last generation of a BRW, started with (possibly random) exponentially growing number of vertices but grown only upto a finite number of generations. Combining them together and taking $k$ tend to infinity will finish our proof.

\newcommand\size{\sigma}
\begin{lemma}
  \label{upperlemma1}
  Fix $k \geq 1$. Consider a probability space $(\Omega, \mathcal{F}, \mathbb{P})$ and a filtration $\left\{\mathcal{H}_k : k \in \mathbb{Z}\right\}$. Start with a sequence of random positive integers
    $\left\{\size_n \right\}_{n \geq 1}$ satisfying  $\liminf_{n \to \infty} n^{-1} \log \log \sigma_n >0$, almost surely.
   Consider the
    following dynamics. Start with $\sigma_n$ particles in $0$-th generation (possibly scattered randomly over the real line) 
    Then grow a BRW, $\mathbb{B}_n$, up to $k$ generations with progeny distribution
    $G$ satisfying \cref{davies_assump} and displacement distribution $F$ satisfying Assumption~\ref{item:F1}. 
    Let $V_{n,q}$ denotes
    the set of vertices in $q$-th generation of $\mathbb{B}_n$ 
    Assume the following conditions.
  \begin{enumerate}
  \item $\mathcal{H}_{n+q}$ contains all the information (cardinality and position of its constituents particles on the real line) of the random set $V_{n,q}$, for all $0 \leq q \leq k$, and $n \geq 1$.
  \item Fix $m \geq 2$. Conditionally on $\mathcal{H}_{m-1}$, $V_{n,m-n}$ is a BRW generation step for progeny $G$ and displacement $F$ from $V_{n,m-n-1}$,  $ \forall \max(m-k,1) \leq n \leq m-1.$ Further, these BRW generation steps are conditionally independent of each other conditional on $\mathcal{H}_{m-1}$. 
  \end{enumerate}
  Let $S_{n,v,q}$ denotes the
  displacement of the particle $v \in V_{n,t}$ from its ancestor in generation $q$, $0 \leq q \leq t \leq k$, for the BRW $\mathbb{B}_n$.
  Then we have the following asymptotic behaviour almost surely.
  \begin{align}
     \label{eq:crucial}
    \limsup_{n \longrightarrow \infty} \frac{\max_{v \in V_{n,k}} S_{n,v,0}}{L(\log \size_n)} \leq \widetilde{\alpha}_k :=
    \begin{cases}  \left(\sum_{i=1}^{k} \alpha^{-i/(r-1)} \right)^{1-1/r},& \mbox{if } r>1, \\
      \alpha^{-k/r}, & \mbox{if } r \in (0,1]. \end{cases}
  \end{align}
\end{lemma}

\begin{remark}
In the statement of \cref{upperlemma1} and its proof, the statement \textit{``Conditional on $\mathcal{G}$, $\mathbb{S}_1$ is a BRW generation step for progeny $G$ and displacement $F$ from $\mathbb{S}$"} means that the $\sigma$-algebra $G$ contains all the information (the size and the position of its constituent particles) for the (random) set $\mathbb{S}$ and $\mathbb{S}_1$ is the first generation of a BRW with progeny $G$ and displacement $F$ having $\mathcal{S}$ as the $0$-th generation, conditional on $\mathcal{G}$. Conditional independence of two or more BRW generation steps will refer to conditional independence of progeny sizes and displacements induced in those steps.
\end{remark}

To explain briefly how the proof of Lemma~\ref{upperlemma1} works, we partition the first generation $V_{n,1}$ according to their displacements from their parents and recursively use similar partitioning technique on the sets of children of each partition block of the first generation. We continue with this recursive partitioning until the last generation. We then provide an upper bound to the total displacement incurred on a path travelling from the initial to last generation by the sum of maximum displacements of each partition block in which its constituent edges lie. Maximizing over such paths followed by optimizing over suitable partitions yield our required upper bound.

\begin{proof}[Proof of Lemma \ref{upperlemma1}]
  We shall prove \cref{eq:crucial} by induction on $k$. As usual, $X_e$ will denote the displacement
  associated with the edge $e$ and $e_v$ denotes the edge connecting the particle $v$ to its parent
  $p(v)$. 
  Note that the sequence $\{\widetilde\alpha_k\}$ satisfies the recursion
  \begin{align}
    \label{eq:recursionalpha}
    \widetilde\alpha_{k+1} 
    & = \begin{cases}
      \alpha^{-1/r} (\widetilde\alpha_k^{r/(r-1)}+1)^{(r-1)/r} , & r > 1 \; , \\
      \alpha^{-1/r}  \widetilde\alpha_k , & 0<r \leq 1\; . \end{cases} 
  \end{align}
  We start by proving the case $k=1$. Since $|V_{n,1}|$ is the sum of $\size_n$ many i.i.d. variables
   having distribution $G$, \cref{strongheavy1} yields
  $\log|V_{n,1}| \stackrel{a.s.}{\sim} (1/\alpha) \log \size_n$, in particular $\liminf_{n \to \infty} n^{-1} \log \log |V_{n,1}| >0,$ almost surely. Here we have applied \cref{strongheavy1} with $\mathcal{F}_n = \mathcal{H}_{n}, \zeta_n = \sigma_n, G_n \equiv G.$ Using independence of progeny and
  displacements, we have by \cref{normordernew} (applied with $\zeta_n=|V_{n,1}|, l_n=\psi_n=1$) that
  $\max_{v \in V_{n,1}} X_{e_v} \stackrel{a.s}{\sim} L( \log |V_{n,1}|)$. Combining these two
  equivalences and using regular variation of $L$, we get
  $$ \max_{v \in V_{n,1}} S_{n,v,0} = \max_{v \in V_{n,1}} X_{e_v} \stackrel{a.s}{\sim} \alpha^{-1/r} L(\log \size_n) = \widetilde\alpha_1 L(\log \sigma_n),$$ proving the
  claim for $k=1$.

  Suppose the claim holds true for $k=1, \ldots, m$, for some $m \geq 1$. We proceed to prove the
  hypothesis for $m+1$. We will classify the particles according to the size of the first
    step. Fix a partition $\Pi = \left\{1=\delta_0 > \delta_1 > \cdots > \delta_l=0 \right\}$ of
  $[0,1]$ and write $\|\Pi\|_r:= \max_{i=1}^l |\delta_i^{1/r}-\delta_{i-1}^{1/r}|$. Consider a
  partition of $V_{n,1}$ as follows. Let $v_{(i)}$ be the particle in $V_{n,1}$ with $i$-th largest
  displacement from its parent among the particles in $V_{n,1}$. The subset $A_{n,i}$ is then defined to
  be consisting of all those particles $v_{(j)}$ with
  $\lfloor |V_{n,1}|^{1-\delta_{i-1}} \rfloor \leq j < \lfloor |V_{n,1}|^{1-\delta_{i}} \rfloor,$
  for all $i=1, \ldots, l$. Keep $v_{(|V_{n,1}|)}$ in the subset $A_{n,l}$. Clearly,
  $\left\{A_{n,i}\right\}_{1 \leq i \leq l}$ forms a partition of $V_{n,1}.$ Notice that
  $|A_{n,i}| \stackrel{a.s.}{\sim} \lfloor |V_{n,1}|^{1-\delta_{i}} \rfloor - \lfloor
  |V_{n,1}|^{1-\delta_{i-1}} \rfloor$ grows double-exponentially almost surely, since
  $\log|V_{n,1}| \stackrel{a.s.}{\sim} (1/\alpha) \log \size_n$. Let $V_{n,m+1,i}$ be the set of
  particles in the $(m+1)$-th generation with first generation ancestor in the subset
  $A_{n,i}$. Fixing our attention only to the subtree generated by elements in $A_{n,i}$, we notice
  this particular subtree to be of height $m$ and satisfies the conditions of \cref{upperlemma1}
  (with $\size_n= |A_{n,i}|$). Therefore, by the induction hypothesis, we have the
  following for all $1 \leq i \leq l$.
  \begin{align*}
    \limsup_{n \to \infty} \frac{\max_{v \in V_{n,m+1,i}} S_{n,v,1}} {L( \log |A_{n,i}|)} \leq \widetilde\alpha_m \; .
  \end{align*}
  Observe that
  $\log |A_{n,i}| \stackrel{a.s.}{\sim} \log \lfloor |V_{n,1}|^{1-\delta_{i}} \rfloor
  \stackrel{a.s.}{\sim} ((1-\delta_i)/\alpha) \log \size_n.$ Combining this with the previous bound,
  we conclude that
  \begin{align*}
    \limsup_{n \to \infty} \frac{\max_{v \in V_{n,m+1,i}} S_{n,v,1}}{L\left( \log \size_n \right)} \leq
    \left(\frac{1-\delta_i}{\alpha}\right)^{1/r} \widetilde\alpha_m \; .
  \end{align*}
  By our construction, $\max_{u \in A_{n,i}} S_{n,u,0} = \max_{u \in A_{n,i}} X_{e_u}$ is the
  $\lfloor |V_{n,1}|^{1-\delta_{i-1}} \rfloor$-th largest among the collection
  $\left\{X_{e_v} : v \in V_{n,1}\right\}.$ Having almost sure  exponential growth of
  $|V_{n,1}|$, we obtain by \cref{normordernew},
  $ \max_{u \in A_{n,i}} S_{n,u,0} \stackrel{a.s.}{\sim} L\left( \delta_{i-1}\log |V_{n,1}| \right)
  \stackrel{a.s.}{\sim} (\delta_{i-1}/\alpha)^{1/r}L\left( \log \size_n \right) $. Combining the
  previous two estimates, observing that
  $$\max_{v \in V_{n,m+1,i}} S_{n,v,0} \leq \max_{v \in V_{n,m+1,i}}S_{n,v,1} + \max_{u \in A_{n,i}}
  S_{n,u,0},$$ 
  and taking minimum over the partitions~$\Pi$, we obtain that almost surely,
  \begin{align*}
    \limsup_{n \to \infty} \frac{\max_{v \in V_{n,m+1}} S_{n,v,0}}{L\left( \log \sigma_n \right) }
    &\leq \alpha^{-1/r} \inf_{\Pi} \left\{ \max_{i=1}^l \left[(1-\delta_i)^{1/r} \widetilde\alpha_m + \delta_{i-1}^{1/r} \right]
      \right\} \\
    &\leq \alpha^{-1/r} \inf_{\Pi} \left\{ \max_{i=1}^l \left[(1-\delta_i)^{1/r} \widetilde\alpha_m + \delta_{i}^{1/r} \right] + \|\Pi\|_r
      \right\} \\
    & \leq 
      \alpha^{-1/r} \left\{ \sup_{s\in [0,1]} \{ (1-s)^{1/r} \widetilde\alpha_m + s^{1/r} \} + \inf_{\Pi}\|\Pi\|_r\right\} \\
    & = \alpha^{-1/r}  \sup_{s\in[0,1]} \{ (1-s)^{1/r} \widetilde\alpha_m + s^{1/r} \} \; .
  \end{align*}
  Solving the optimization and using \cref{eq:recursionalpha}  completes the proof of the lemma.
\end{proof}

\begin{proof}[Proof of \cref{lighttailmain}, upper bound]
  In the proof, all almost sure statement are to be understood as almost surely on $(W>0)$. Fix any $k \geq 1$.  Consider the tree only from generation $(n-k)$. This is a BRW, started with
  $|D_{n-k}|$ vertices and grown up to $k$ generations, with progeny distribution $G$ and
  displacement distribution $F$ satisfying Assumption~\ref{item:F1}. Since $\liminf_{n \to \infty} n^{-1} \log \log |D_{n-k}| = -\log \alpha >0$ almost surely, using \cref{upperlemma1}, we obtain the following upper
  bound.
  \begin{align*}
    \limsup_{n \longrightarrow \infty} \frac{\max_{v \in D_{n}} S_{v,n-k}}{L\left( \log |D_{n-k}| \right)} \leq \alpha_k, \; \text{almost surely},
  \end{align*}
  where $S_{v,n-k}$ denotes the displacement of the vertex $v \in D_n$ from its ancestor in generation $(n-k)$. Here we have applied \cref{upperlemma1} with $\sigma_n = |D_{n-k}|$, $V_{n,q} = D_{n-k+q}$ for $0 \leq q \leq k$ and $\mathcal{H}_n$ being the  $\sigma$-algebra generated by the information of the process upto generation $n-k$.
  Since $\log |D_{n-k}| = \log Z_{n-k} \stackrel{a.s.}{\sim} \alpha^k \log Z_n$, we have
  \begin{align*}
    \limsup_{n \longrightarrow \infty} \frac{\max_{v \in D_{n}} S_{v,n-k}}{L\left( \log Z_n \right)} \leq \alpha^{k/r}\alpha_k
    =  \left(\sum_{t=0}^{k-1} \alpha^{t/(r-1)}\right)^{1-1/r} \vee 1 \; .
  \end{align*}
  On the otherhand, using (\ref{prelimupper}), we get
  \begin{align*}
    \limsup_{n \longrightarrow \infty} \frac{M_{n-k}}{L\left( \log Z_{n-k} \right)} \leq
    \frac{A_2W^{1/r}}{1-\alpha^{1/r}} , \; \text{almost surely}.
  \end{align*}
  Combining the above two estimates with the observation that
  $M_n \leq M_{n-k} + \max_{v \in D_{n}} S_{v,n-k}$, we get, almost surely,
  \begin{align*}
    \limsup_{n \longrightarrow \infty} \frac{M_{n}}{L\left( \log Z_n \right)}
    & \leq \frac{\alpha^{k/r}}{1-\alpha^{1/r}} A_2W^{1/r} + \left[ \left(\sum_{t=0}^{k-1} \alpha^{t/(r-1)}\right)^{1-1/r} \vee 1 \right] \; .
  \end{align*}
  Letting $k$ tend to $ \infty$ concludes the proof.
\end{proof}

\section{BRW with very rapidly varying tails}
\label{section:veryltd}

This section is devoted to the analysis of the situation when the right tail of the displacement distribution function $F$ decays more rapidly than those which satisfy \cref{assumpF}. In particular, it consider the cases where the function $L$, as defined in \cref{defL}, is slowly varying at $\infty$. An example will be $L(x)=\log x$ for all $x >0$, i.e. $F(x)=1-\exp(-e^x)$, for all $x \in \mathbb{R}$. 

We have observed in \cref{th:regvarmain} and \cref{lighttailmain} that, if $F$ satisfies \cref{assumption:displacement_heavy} or \cref{assumpF} with $r \in (0,1]$, only the last generation effectively contributes in determining the right-most position in the $n$-th generation, whereas if $F$ satisfies \ref{assumpF} with $r \in (1, \infty)$, then previous generations also contribute albeit with a geometrically decaying weight. In both cases though, the right-most position in the $n$-th generation is asymptotically of the same order  as the largest displacement incurred in the $n$-th generation. In sharp contrast to these, the case which we shall analyse in this section demonstrates a situation where the right tail of $F$ is so small that each generation contributes equally in determining the right-most position in the last generation. In particular, we expect $M_n$ to be $O \left( \sum_{k=1}^n L(\log Z_k) \right)$, since $L(\log Z_n)$ is the asymptotic order of the largest displacement in the $n$-th generation. If we assume that the right-tail of $F$ decays sufficiently fast, this is indeed the situation as stated in \cref{verylighttail:mainthm}. Also in this situation, $M_n \gg L(\log Z_n)$, a stark difference from those in previous sections.

 We would, though, like to point out that there are some examples of $F$ which lie the twilight zone between those satisfying \cref{assumpF} and \cref{verylighttail:assump}. It may be interesting to try to think of what happens in those cases, but we will not further concern ourselves regarding that matter in this paper.


\begin{assumption}{\label{verylighttail:assump}}
 $F$ is a distribution function on $\mathbb{R}$ with the following properties. Recall that $L$ is the left-continuous inverse of $K=-\log(1-F)$.
  \begin{enumerate}[leftmargin=*,label=(L\arabic*)]
  \item \label{item:L1} $F(x)<1, \;\forall \; x \in \mathbb{R},$ and $L$ is slowly varying at $\infty$ with $K(L(x)) \sim x,$ as $x \to \infty$.
  \item \label{item:L2} There exists $\eta_0 \in (0, 1-\alpha)$ such that for any $\eta \in (0, \eta_0)$, there exists a sequence of non-negative real numbers $\left\{\kappa_n(\eta)\right\}_{n \ge 1}$ satisfying the following conditions.
  $$\lim_{\eta \downarrow 0} \liminf_{n \to \infty} \dfrac{L\left(\alpha^{-n}e^{-\kappa_n(\eta)}\right)}{L\left(\alpha^{-n} \right)} \geq 1, \; \; \text{ and } \sum_{n \geq 1} e^{-\kappa_n(\eta)} < \infty, \; \forall \; \eta \in (0, \eta_0).$$
  
  
  \end{enumerate}
\end{assumption}

\begin{theorem}{\label{verylighttail:mainthm}}
Let \cref{davies_assump} and \cref{verylighttail:assump} hold. Then almost surely conditional on survival of the tree, we have 
$$ \lim_{n \to \infty} \dfrac{M_n}{\sum_{k=1}^n L(\alpha^{-k})} = 1.$$
\end{theorem}

\Cref{verylighttail:mainthm} confirms the intuition described above. Heuristically, this means that the maximum is achieved by a path such that at each generation, the displacement is essentially the maximum of the displacements of that generation.

Before diving into the proof of \cref{verylighttail:mainthm}, let us first make some sense of the complicated and apparently artificial conditions presented in \cref{verylighttail:assump}. It is evident from \cref{item:L1} that $L$ is non-decreasing, slowly varying at $\infty$ with $L(\infty)=\infty$. We shall now explore some simpler assumptions on $L$ which guarantees \cref{item:L2}. In the process, we shall explore some examples of displacement distribution $F$ which satisfies these conditions. 

\begin{corollary}{\label{case1L}}
Assume that $L$ satisfies \cref{item:L1} and further suppose that there exists $a>1$ such that $ x \mapsto L(a^x)$ is regularly varying at $\infty$ with index $ \gamma \geq 0$. Then $L$ satisfies \cref{verylighttail:assump} and we have
$$ \lim_{n \to \infty} \dfrac{M_n}{nL(\alpha^{-n})} = \dfrac{1}{\gamma+1}, \; \text{ almost surely on survival}.$$  
\end{corollary}

\begin{proof}
We shall start by establishing the fact that $x \mapsto L(b^x)$ is regularly varying at $\infty$ with index $\gamma$, for any $b >1$. Fix $t >0$. Then 
$$ \lim_{x \to \infty} \dfrac{L(b^{tx})}{L(b^x)} = \lim_{x \to \infty} \dfrac{L\left(a^{tx \log b /\log a}\right)}{L\left(a^{x\log b /\log a}\right)} = t^{\gamma},$$
establishing our claim. To establish \cref{item:L2}, we make the choice $\kappa_n(\eta)=-n \log (1-\eta)$ for all $n \geq 1$ and $0<\eta < \eta_0 = 1-\alpha.$ Summability of the sequence $\left\{e^{-\kappa_n(\eta)}\right\}_{n \geq 1}$ is obvious.
Using regularly varying property of $x \mapsto L(\alpha^{-x})$, we obtain
$$ \lim_{n \to \infty} \dfrac{L\left(\alpha^{-n}e^{-\kappa_n(\eta)}\right)}{L\left(\alpha^{-n} \right)} = \lim_{n \to \infty} \dfrac{L\left(e^{n (\log (1-\eta) - \log \alpha) }\right)}{L\left(e^{-n \log \alpha} \right)} = \left(\dfrac {\log \alpha - \log (1-\eta)}{\log \alpha} \right)^{\gamma},$$
and hence 
$$ \lim_{\eta \downarrow 0} \lim_{n \to \infty} \dfrac{L\left(\alpha^{-n}e^{-\kappa_n(\eta)}\right)}{L\left(\alpha^{-n} \right)} =1 .$$
These prove that $L$ satisfies \cref{verylighttail:assump} and hence we can apply \cref{verylighttail:mainthm}. All we have to show now is the fact that 
\begin{equation}{\label{toshow1}}
 \sum_{k=1}^n L(\alpha^{-k}) \sim (\gamma+1)^{-1}nL(\alpha^{-n}), \; \text{ as } n \to \infty.
\end{equation}
In order to do so, we first apply monotonicity of $L$ to conclude that 
\begin{equation}{\label{karam}}
(\gamma+1)^{-1}nL(\alpha^{-n}) \sim \int_{0}^n L(\alpha^{-x}) \, dx \leq \sum_{k=1}^n L(\alpha^{-k}) \leq \int_{0}^{n+1} L(\alpha^{-x}) \, dx \sim  (\gamma+1)^{-1}(n+1)L(\alpha^{-n-1}),
\end{equation}
where the first and the last asymptotics in \cref{karam} follows from \textit{Karamata's Theorem} (see~\cite[Theorem 0.6]{Resnick}) and the fact that $x \mapsto L(\alpha^{-x})$ is regularly varying with index $\gamma \geq 0$. This regularly varying property, along with monotonicity, also implies that $nL(\alpha^{-n}) \sim (n+1)L(\alpha^{-n-1})$ and thus \cref{toshow1} is established.  
\end{proof}

\begin{example}
The condition on $L$ assumed in \cref{case1L} is satisfied for all the distribution functions $F$ for which $K=-\log(1-F)$, for large enough $x$, has the form $K(x) = \exp(cx^{\beta})$, for some $c, \beta >0$ or $K(x) = \exp(c_1e^{c_2x})$, for some $c_1,c_2>0$. In the first case, $L(x) = \left(\log (x)/c \right)^{1/\beta}$ and hence satisfies \cref{case1L} with $\gamma = 1/\beta$. In the second set of examples, clearly $\gamma=0$.
\end{example}

The next set of examples of $L$, that we shall consider, will allow for $L$ to be increasing in a much slower rate than required by \cref{case1L}.

\begin{corollary}{\label{caseL2}}
Suppose that $L$ satisfies \cref{item:L1} and there exists $\left\{\kappa_n : n \geq 1\right\}$, positive real numbers, such that
\begin{equation}{\label{cond}}
 L(\alpha^{-n}e^{-\kappa_n}) \sim L(\alpha^{-n}), \; \text{ as } n \to \infty, \; \text{ and } \sum_{n \geq 1} \exp(-\kappa_n) < \infty.
\end{equation}
 Then $L$ satisfies \cref{verylighttail:assump} and therefore the conclusion of \cref{verylighttail:mainthm} holds true.
\end{corollary} 

\begin{proof}
The statement is immediate from \cref{verylighttail:mainthm} if we take $\kappa_n(\eta)=\kappa_n$, for all $n \geq 1$ and $\eta \in (0,1-\alpha)$.
\end{proof}

The $L$ in Assumption \ref{item:L1} is a slowly varying function and for such a function we have a Karamata's representation theorem (see \cite[Section 1.3]{bingham} for more details on Karamata Representation). We now impose some conditions on this representations that allows us to verify \cref{verylighttail:assump}.
\begin{corollary}{\label{case4L}}
Suppose that $L$ satisfies \cref{item:L1} and the following is a Karamata Representation of $L$ for some $a>0$.
$$ L(x) = c(x) \exp \left( \int_{a}^x \dfrac{\varepsilon(t)}{t} \, dt\right), \; \forall \; x >a, $$
where $c,\varepsilon : \mathbb{R}^+ \mapsto \mathbb{R}^{+}$ and $\lim_{x \to \infty} c(x)=c \in (0, \infty), \lim_{t \to \infty} \varepsilon(t)=0.$  Further assume that $\varepsilon(t)\log \log t =o(1)$, as $t \to \infty$. Then $L$ satisfies \cref{verylighttail:assump} and therefore the conclusion of \cref{verylighttail:mainthm} holds true.
\end{corollary}

\begin{proof}
We shall show that $L$ satisfies the conditions required to apply \cref{caseL2} for the choice $\kappa_n=2\log n$. Clearly, $\sum_{n \geq 1} \exp(-\kappa_n) < \infty.$ Setting 
$$ \upsilon_n := \left(\log \log \alpha^{-n} \right) \sup_{t \in \left[L(\alpha^{-n}e^{-\kappa_n}),L(\alpha^{-n}) \right]} \varepsilon(t) = o(1), $$
we note that for large enough $n$,
\begin{align*}
\dfrac{L(\alpha^{-n}e^{-\kappa_n})}{L(\alpha^{-n})} = \dfrac{c(\alpha^{-n}e^{-\kappa_n})}{c(\alpha^{-n})}\exp \left( - \int_{L(\alpha^{-n}e^{-\kappa_n})}^{L(\alpha^{-n})} \dfrac{\varepsilon(t)}{t} \, dt \right) & \geq \dfrac{c(\alpha^{-n}e^{-\kappa_n})}{c(\alpha^{-n})}\exp \left( - \dfrac{\upsilon_n}{\log \log \alpha^{-n}} \log \dfrac{\alpha^{-n}}{\alpha^{-n}e^{-\kappa_n}}  \right) \\
& =  \dfrac{c(\alpha^{-n}e^{-\kappa_n})}{c(\alpha^{-n})}\exp \left( - \dfrac{2\upsilon_n \log n }{\log n + \log \log \alpha^{-1}}   \right),
\end{align*}
and hence $L(\alpha^{-n}e^{-\kappa_n}) \sim L(\alpha^{-n}).$ This concludes the proof.
\end{proof}

\begin{corollary}{\label{case5L}}
Suppose that $L$ satisfies \cref{item:L1} and of the form $L(x)=\exp(h(\log x)+o(1))$ as $x \to \infty$ where $h$ is   differentiable for all $x >x_0$ and satisfies $\lim_{x \to \infty} h^{\prime}(x)\log x =0$. 
Then $L$ satisfies \cref{verylighttail:assump} and therefore the conclusion of \cref{verylighttail:mainthm} holds true. A sufficient condition implying $\lim_{x \to \infty} h^{\prime}(x)\log x =0$ is that $h$ is regularly varying with index $\rho \in [0,1)$ with $h^{\prime}$ being ultimately monotone.
\end{corollary}

\begin{proof}
Assuming without loss of generality that $L(x_0) >0$,  we have  a \textit{Karamata Representation} of $L$ as follows.
\begin{align*}
L(x) = c(x) \exp \left( h(\log x) - h(x_0)\right) = c(x) \exp \left( \int_{x_0}^{\log x} h^{\prime}(t)\, dt \right) = c(x) \exp \left( \int_{\log x_0}^{x} \dfrac{h^{\prime}(\log u )}{u}\, du \right)
\end{align*}
where $c(x) \to e^{h(x_0)}$ as $x \to \infty$. Application of \cref{case4L}, along with the condition that $h^{\prime}(x)\log x=o(1)$, implies the first assertion.

If $h$ is regularly varying with index $\rho \in [0,1)$ and $h^{\prime}$ is ultimately monotone, we can apply \textit{Monotone Density Theorem}~\cite[Theorem 1.7.2]{bingham} to guarantee that $h^{\prime}(x) \sim \rho x^{\rho-1} l(x)$ as $x \to \infty$, for some slowly varying function $l$. Clearly,
$$ h^{\prime}(x) \log x = \dfrac{x^{1-\rho}h^{\prime}(x)}{l(x)} \dfrac{l(x)\log x}{x^{1-\rho}} \to 0, \; \text{ as } x \to \infty,$$
since $x \mapsto l(x)\log x$ is also slowly varying and $\rho <1$. This proves the second assertion.
\end{proof}

\begin{example}
A class of distribution functions $F$ for which the criteria stated in \cref{case5L} holds is as follows. We take $K(x) = e^{g(\log x)}$, where $g$ is regularly varying with index $\beta >1$, with monotone derivative $g^{\prime}$ which is then regularly varying with index $\beta-1>0$. Then $L(x) = e^{h(\log x)}$ with $h=g^{\leftarrow}$, the inverse of $g$, regularly varying with index $1/\beta$. It is easy to see that members of this class do not satisfy the conditions stated in \cref{case1L}, but satisfy the conditions stated in \cref{case5L}. Moreover, following similar arguments as presented while establishing \cref{karam}, we can show that in this case,
\begin{align*}
\sum_{k=1}^n L(\alpha^{-k}) \sim \int_{0}^n L(\alpha^{-t}) \, dt = \int_{0}^n  e^{h(t \log (1/\alpha))} \, dt & = \dfrac{1}{\log (1/\alpha)} \int_{h(0)}^{h(n \log (1/\alpha))}  g^{\prime}(u) e^u \, du \\
& \sim \dfrac{1}{\log (1/\alpha)} g^{\prime} \left(h(n \log (1/\alpha)) \right) e^{h(n \log (1/\alpha))} \\
& \sim \log ^{-1/\beta}(1/\alpha) g^{\prime}(h(n)) L(\alpha^{-n}).
\end{align*}
The last asymptotic equivalence is due to the fact that $g^{\prime}\circ h$ is regularly varying with index $1-1/\beta$. Therefore, almost surely on $(W>0)$,
\begin{align*}
\lim_{n \to \infty} \dfrac{M_n}{g^{\prime}(g^{\leftarrow}(n))L(\alpha^{-n})} = \dfrac{1}{\log ^{1/\beta}(1/\alpha)}.
\end{align*}
For instance, taking $g(x) = cx^{\beta}$ with $\beta >1$ and setting $\tau=(\log(1/\alpha)/c)^{1/\beta}$ yields
\begin{align*}
\lim_{n \to \infty} \dfrac{M_n}{n^{1-1/\beta}\exp(\tau n^{1/\beta})} = \dfrac{\beta}{\tau}, \; \text {almost surely on } (W>0).
\end{align*}
\end{example}

Before delving into the details, we briefly describe the strategy we are going to adopt to prove \cref{verylighttail:mainthm}.
 The upper bound to the maximum is quite straight-forward, as the right-most position is always bounded above by the sum of the maximum displacements over generations. In order to establish the lower bound, we extract a subtree with $n$-th generation be denoted by $A_n$ which consists of those particles whose displacements from their parents are approximately among the largest $O\left(|B_{n}|^{1-\delta_n}\right)$ many, where $B_n$ is the set of children of $A_{n-1}$ in the original tree.  Choosing the sequence $\left\{\delta_n \right\}$ to be converging to $0$ at an appropriate speed as dictated by \cref{item:L2}, we can  in effect guarantee  the existence of an infinite ray, starting from the root, along which displacements from the parents are among the largest in the corresponding generations.  This obviously gives a matching lower bound.

\begin{proof}[Proof of upper bound in \cref{verylighttail:mainthm}]
Fix $\varepsilon >0$.   Note that $M_n\le \sum_{i=1}^n \widetilde M_{Z_i}$. We get the following for all large enough $n$ almost surely.
   \begin{align*}
      \mathbb{P} \left[ \widetilde{M}_{Z_i} > L((1+\varepsilon)\log Z_i)\Big \vert \mathbb{T}\right]
      & = 1- \left(F \circ L((1+\varepsilon)\log Z_i)\right)^{Z_i} \\
      &= 1- \left(1- \exp(-K\circ L((1+\varepsilon)\log Z_i))\right)^{Z_i} \\
      & \leq 1- \left(1- \exp(-(1+\varepsilon)\log Z_i)\right)^{Z_i} \\
      & = 1- \left(1-  Z_i^{-1-\varepsilon}\right)^{Z_i} \leq Z_i^{-\varepsilon} \; .
    \end{align*}
    Since the last expression is summable, we have by Borel-Cantelli Lemma, 
    almost surely conditioned on survival,
    \begin{equation}\label{eq:upperbdM-tildeM2}
       M_n\le \sum_{i=1}^n \widetilde M_{Z_i} \le \sum_{i=1}^n L((1+\varepsilon)\log Z_i)+ \sum_{i=1}^{\infty} \left( \widetilde{M}_{Z_i} \vee 0 \right) \mathbbm{1} \left(  \widetilde{M}_{Z_i} > L((1+\varepsilon)\log Z_i)\right), 
    \end{equation}
    where the second expression in the right hand side of (\ref{eq:upperbdM-tildeM2}) is finite almost surely.
  Since the last expression is summable, we have by Borel-Cantelli Lemma, 
  \begin{align*}
  \limsup_{n \to \infty} \dfrac{M_n}{\sum_{k=1}^n L(\alpha^{-k})} \leq  \limsup_{n \to \infty}  \dfrac{\sum_{k=1}^n L((1+\varepsilon)\log Z_k)}{\sum_{k=1}^n L(\alpha^{-k})} &\leq \limsup_{n \to \infty} \dfrac{ L((1+\varepsilon)\log Z_n)}{ L(\alpha^{-n})} =1,
  \end{align*}
  almost surely on $(W>0)$. The last limit follows from slowly varying property of $L$ and the fact that $(1+\varepsilon)\alpha^n \log Z_n \to (1+\varepsilon)W$, almost surely.
  \end{proof}
 
  

\begin{proof}[Proof of lower bound in \cref{verylighttail:mainthm}]
Fix $\eta \in(0, \eta_0)$, such that $L(\alpha^{-j}\exp(-\kappa_j(\eta))) \to \infty$, as $j \to \infty$. Indeed, the first condition in \cref{item:L2} and the fact that $L(\infty)=\infty$ guarantee that the previously mentioned property holds true for all small enough $\eta$. 
Set $c(\eta)=\sum_{n \geq 1} \exp(-\kappa_n(\eta)) < \infty$. Fix $\upsilon \in (0, 1-\alpha)$.  Define the following sequences.
$$ \gamma_n(\eta) := 1- \dfrac{1}{2c(\eta)} \sum_{k=1}^n \exp \left( -\kappa_k(\eta)\right) , \; \delta_n(\eta):= \dfrac{\upsilon(\gamma_{n-1}(\eta)-\gamma_n(\eta))}{\gamma_{n-1}(\eta)}, \; \forall \; n \geq 1,$$
with $\gamma_0(\eta):=1$. By construction, $\delta_n(\eta) \in (0,\upsilon)$, $\gamma_n(\eta) =\prod_{k=1}^n \left(1-\upsilon^{-1}\delta_k(\eta)\right) \in [1/2,1]$ and hence
\begin{align}
e^{\kappa_n(\eta)}\delta_n(\eta) \prod_{k=1}^{n-1}\left(1-\delta_k(\eta)\right) &> e^{\kappa_n(\eta)}\delta_n(\eta) \prod_{k=1}^{n-1}\left(1-\upsilon^{-1}\delta_k(\eta)\right)  \nonumber\\
&=e^{\kappa_n(\eta)}\upsilon \left( \gamma_{n-1}(\eta)-\gamma_n(\eta)\right) = \dfrac{\upsilon}{2c(\eta)} >0, \; \forall \; n \geq 1. \label{bound}
\end{align} 
Recursively define the following sets of vertices. Set $B_{0, \eta}=A_{0, \eta} :=D_0$, which contains only the root. For any $j \geq 1$, set $B_{j,\eta}$ to be the set children of the vertices in $A_{j-1, \eta}$. Recall the notation $C(v)$ which denotes the set of children for any particle $v$. Define $A_{j,\eta}$ to be the those particles in $B_{j,\eta}$ whose displacement from their parents are among the maximum $|A_{j,\eta}|$ many of those in $B_{j,\eta}$; break the ties uniformly at random if needed. Here
$$ |A_{j,\eta}| := \sum_{v \in A_{j-1,\eta}} \lfloor |C(v)|^{1-\delta_j(\eta)} \rfloor.$$
It is immediately observed that the sequence $\left\{|A_{j,\eta}| : j \geq 0\right\}$ has the same law as $\left\{Z_{j,\eta}^* : j \geq 0\right\}$, where $Z_{j,\eta}^*$ is the size of the $j$-th generation of the inhomogeneous branching process, started from one particle, and where the particles in $n$-th generation produces i.i.d. many off-springs (independent of the structure of the tree until that generation) having distribution $G_{(1-\delta_{n+1}(\eta))}$, as defined before \cref{thin1}. Since $\delta_j(\eta) \in (0, \upsilon)$ for $\upsilon < 1-\alpha$, we apply \cref{inhom} and \cref{thin1} to obtain the following.
$$ \mathbb{P} \left[\lim_{n \to \infty}  \left(\prod_{j=1}^n  \dfrac{\alpha}{1-\delta_j(\eta)} \right) \log |A_{n,\eta}| = W^{*} > 0 \right] =: q(\eta) >0.$$
Let $E_{\eta}$ denote the event in the above equation. In particular, almost surely on $E_{\eta}$,
$$ \liminf_{n \to \infty} n^{-1}\log \log |A_{n,\eta}| \geq \liminf_{n \to \infty} \dfrac{1}{n}\sum_{k=1}^n \log \left( \dfrac{1-\delta_j(\eta)}{\alpha} \right) \geq \log \left( \dfrac{1-\upsilon}{\alpha} \right) >0. $$
A standard application of \cref{strongheavy1} then implies that $\log |B_{j,\eta}| \stackrel{a.s.}{\sim} \alpha^{-1}\log |A_{j-1,\eta}|$ as $j \to \infty$, on $E_{\eta}$. Here we have applied \cref{strongheavy1} with $\mathcal{F}_j$ being the $\sigma$-algebra containing all the information (tree structure and displacement of the particles) upto generation $j$, $\zeta_j = |A_{j,\eta}|$ and $G_j = G$ for all $j \geq 0$. In particular, $|B_{j,\eta}|$ also grows double-exponentially almost surely on $E_{\eta}$. Another similar application of \cref{strongheavy1} and \cref{thin1} guarantees that  $\log |A_{j,\eta}| \stackrel{a.s.}{\sim} (1-\delta_j(\eta))\alpha^{-1}\log |A_{j-1,\eta}|$ as $j \to \infty$,  on $E_{\eta}$. Here we have applied \cref{strongheavy1} with the same choice of $\mathcal{F}_j$ and $\zeta_j$ as earlier but with $G_j=G_{(1-\delta_{j+1}(\eta))}$, for all $j \geq 0$. 
Combining these two, we have  $\log |A_{j,\eta}| \stackrel{a.s.}{\sim} (1-\delta_j(\eta))\log |B_{j,\eta}|$ as $j \to \infty$, on $E_{\eta}$.

\cref{item:L1} guarantees that $L$ satisfies the assumptions required to apply \cref{normordernew}. Since, on the event $E_{\eta}$, $|B_{j,\eta}|$ grows double-exponentially almost surely, we can employ \cref{normordernew} to yield the following.
\begin{equation}{\label{minconv}}
\dfrac{\min_{v \in A_{j, \eta}} X_{e_v}}{L(\delta_j(\eta) \log |B_{j,\eta}|)} \stackrel{a.s.}{\longrightarrow} 1, \; \text{on } E_{\eta}.
\end{equation}
The application of \Cref{normordernew} is valid indeed, since on the event $E_{\eta}$, 
$$ \delta_j(\eta) \log |B_{j, \eta}| \stackrel{a.s.}{\sim} \dfrac{\delta_j(\eta)}{1-\delta_j(\eta)} \log |A_{j, \eta}| \stackrel{a.s.}{\sim} \dfrac{\delta_j(\eta)}{1-\delta_j(\eta)} W^{*} \alpha^{-j}\prod_{i=1}^j (1-\delta_i(\eta)) \geq W^*\alpha^{-j}e^{-\kappa_j(\eta)} \dfrac{\upsilon}{2c(\eta)} \to \infty,$$
where the last line follows from the fact that $L(\alpha^{-j}e^{-\kappa_j(\eta)}) \to \infty$ and $K(x) < \infty$ for all $x$ with $K(\infty)=\infty$.

By the slowly varying property of $L$, we also have for large enough $j$,
\begin{align}
L(\delta_j(\eta) \log |B_{j,\eta}|) \stackrel{a.s.}{\sim} L\left(\dfrac{\delta_j(\eta)}{1-\delta_j(\eta)} \log |A_{j,\eta}| \right) &  \stackrel{a.s.}{\sim}  L\left(\dfrac{W^*\delta_j(\eta)}{1-\delta_j(\eta)} \prod_{k=1}^j \dfrac{1-\delta_k(\eta)}{\alpha}\right) \nonumber \\
& \geq L \left(\alpha^{-j}W^{*} \dfrac{\upsilon e^{-\kappa_j(\eta)}}{2c(\eta)} \right) \stackrel{a.s.}{\sim} L \left(\alpha^{-j} e^{-\kappa_j(\eta)} \right) \label{minconv2},
\end{align} 
on $E_{\eta}$. Here we have used the definition of $E_{\eta}$, monotonicity of $L$ and \cref{bound} to obtain the  inequality above. Therefore,
\begin{align}{\label{finalconv}}
\lim_{j \to \infty} \dfrac{\min_{v \in A_{j, \eta}} X_{e_v}}{L(\alpha^{-j}e^{-\kappa_j(\eta)})} = 1, \; \text{almost surely on } E_{\eta}.
\end{align}
By construction,
$$ M_n \mathbbm{1}(|A_{n, \eta}| >0) \geq \mathbbm{1}(|A_{n, \eta}|>0) \max_{v \in A_{n, \eta}} S_v \geq  \mathbbm{1}(|A_{n, \eta}|>0) \sum_{j=1}^n  \min_{v \in A_{j, \eta}} X_{e_v}.$$ The logic behind the above inequality is same as the one used in the proof of lower bound in \cref{lighttailmain}. Since, $|A_{n,\eta}|$ converges to infinity almost surely on $E_{\eta}$, we can apply \cref{finalconv} and \cref{cezaro} to conclude that 
$$ \liminf_{n \to \infty} \dfrac{M_n}{\sum_{j=1}^n L(\alpha^{-j})}\geq  \liminf_{n \to \infty} \dfrac{L\left(\alpha^{-n}e^{-\kappa_n(\eta)}\right)}{L\left(\alpha^{-n} \right)} =:\varphi(\eta), \; \text{almost surely on } E_{\eta}.$$


Set $c_n := \sum_{j=1}^{n} L(\alpha^{-j})$, for all $n \geq 1$. Monotonicity and slowly varying property of $L$ and \cref{cezaro} implies that $c_n \sim c_{n-k}$, as $k \to \infty$, for any $k \in \mathbb{N}$. Fix any $k \geq 1$ and let $S_{v,n,n-k}$ be the displacement of particle $v \in D_n$ from its ancestor in generation $k$. For any $u \in D_k$, let $D_{u,n,n-k}$ be the particles in $D_n$ with $u$ as an ancestor.  Our previous analysis shows that, conditioned on $D_k$, there exists independent events  $\left\{E_{\eta,u} : u \in D_k\right\}$, having same probability as $E_{\eta}$, such that 
 $$ \liminf_{n \to \infty} c_{n}^{-1} M_{u,n,n-k}=\liminf_{n \to \infty} c_{n-k}^{-1} M_{u,n,n-k} :=  \liminf_{n \to \infty} c_{n-k}^{-1} \max_{v \in D_{u,n,n-k}} S_{v,n,n-k} \geq \varphi(\eta), \; \text{ a.s. on } E_{\eta,u}.$$
 Since, $M_n = \max_{u \in D_k} \left(S_u + M_{u,n,n-k} \right)$, we can argue that
 \begin{align*}
 \mathbb{P} \left( \liminf_{n \to \infty} c_n^{-1}M_n < \varphi(\eta) \big \rvert \mathbb{T}\right) = \lim_{k \to \infty} \mathbb{P} \left( \liminf_{n \to \infty} c_n^{-1}M_n < \varphi(\eta) \big \rvert \left\{D_j\right\}_{1 \leq j \leq k}\right) & = \lim_{k \to \infty} \mathbb{P} \left( \bigcap_{u \in D_k} E_{\eta,u}^c  \big \rvert \left\{D_j\right\}_{1 \leq j \leq k} \right) \\
 & = \lim_{k \to \infty}(1-q({\eta}))^{Z_k} \\
 &= \mathbbm{1}(\mathbb{T} \text{ extincts }) = \mathbbm{1}(W=0).
 \end{align*}
Therefore, almost surely on the event $(W>0)$, we have $\liminf_{n \to \infty} c_n^{-1}M_n \geq \varphi(\eta)$.Taking $\eta \downarrow 0$ and using the first condition in \cref{item:L2}, yields $\liminf_{n\to\infty} c_n^{-1}M_n\geq1$. Since we have already proved that the corresponding limsup is less than or equal to 1, this concludes the proof.
\end{proof}

\section{Speed of BRW with infinite progeny mean}{\label{sec:speed}}

There exists different ways to define a speed for the BRW. In \cite{benjperes,lyonspem,peres}, three notions of speed were introduced; namely \textit{Cloud speed, Burst speed} and \textit{Sustainable speed}. In order to define them, we introduce the following notations. Rays of the tree $\mathbb{T}$ are formally defined as infinite paths starting from the root which do not backstep. We denote the rays of the tree $\mathbb{T}$ by $\mathcal{R}$ and the set of all rays by $\partial \mathbb{T}$. The aforementioned three notions are defined as follows.
\begin{equation}{\label{cloud}}
\text{Cloud speed} : \; s_{\text{cloud}} := \limsup_{n \to \infty}  \max_{v : |v| =n } \dfrac{S_v}{|v|} = \limsup_{n \to \infty} \dfrac{M_n}{n};
\end{equation}
\begin{equation}{\label{bursssust}}
\text{Burst speed} : \; s_{\text{burst}} := \sup_{\mathcal{R} \in \partial \mathbb{T}} \limsup_{v \in \mathcal{R}} \dfrac{S_v}{|v|}, \; \;\text{Sustainable speed} : \; s_{\text{sust}} := \sup_{\mathcal{R} \in \partial \mathbb{T}} \liminf_{v \in \mathcal{R}} \dfrac{S_v}{|v|}.
\end{equation}
Here $|v|$ denotes the distance of the vertex $v$ from the root, i.e., the generation to which $v$ belongs. We refer to \cite{peres} for a detailed exposition on these concepts. By Kolmogorov $0-1$ law the speeds are almost surely constant when the displacements are i.i.d, though they might be different from each other. The following relation holds trivially.
\begin{equation}{\label{order}}
s_{\text{cloud}} \geq s_{\text{burst}} \geq s_{\text{sust}}.
\end{equation}
 It is a well-known fact that for BRW the notions of cloud speed, burst speed and sustainable speed coincide, established by \cite{Hammersley, Kingman, Biggins}. This statement was proved under the assumptions that the progeny variables have finite mean and the displacement variables  have finite moment generating function. The later condition was removed by \cite{gantert} for displacement variables with semi-exponential tails, accommodating the changing rate of growth in the definition of the speeds. Since the rate of growth for the maximum displacement are drastically different from one another for the cases we considered in \Cref{heavytail}, \Cref{section:ltd} and \Cref{section:veryltd}; we need to properly modify our definition of speeds in those cases. In all  these cases our target is to choose a correct rate of growth to get  almost surely constant finite and non-zero speed, which is same for all three notions. The following three results serve this purpose. In \Cref{speedh,speedl,speedvl} below, the three terms correspond to cloud speed, burst speed and sustainable speed, respectively, for the corresponding cases. They are also trivially in non-increasing order similar to \cref{order}.
 \begin{proposition}{\label{speedheavy}}
 Under \Cref{davies_assump} and \Cref{assumption:displacement_heavy}, the following holds almost surely conditional on the survival of the tree.
 \begin{equation}{\label{speedh}}
 \limsup_{n \to \infty}  \max_{v : |v| =n } \dfrac{\log \log S_v}{|v|} = \sup_{\mathcal{R} \in \partial \mathbb{T}} \limsup_{v \in \mathcal{R}} \dfrac{\log \log S_v}{|v|}= \sup_{\mathcal{R} \in \partial \mathbb{T}} \liminf_{v \in \mathcal{R}} \dfrac{\log \log S_v}{|v|}= - \log \alpha.
 \end{equation}  
 \end{proposition}
 \begin{proposition}{\label{speedlighttail}}
 Under \Cref{davies_assump} and \Cref{assumpF}, the following holds almost surely conditional on the survival of the tree.
  \begin{equation}{\label{speedl}}
  \limsup_{n \to \infty}  \max_{v : |v| =n } \dfrac{ \log S_v}{|v|} = \sup_{\mathcal{R} \in \partial \mathbb{T}} \limsup_{v \in \mathcal{R}} \dfrac{ \log S_v}{|v|}= \sup_{\mathcal{R} \in \partial \mathbb{T}} \liminf_{v \in \mathcal{R}} \dfrac{ \log S_v}{|v|}= - \dfrac{1}{r}\log \alpha. 
  \end{equation}
 \end{proposition}
 \begin{proposition}{\label{speedverylight}}
  Under \Cref{davies_assump} and \Cref{verylighttail:assump}, the following holds almost surely conditional on the survival of the tree.
  \begin{equation}{\label{speedvl}}
  \limsup_{n \to \infty}  \max_{v : |v| =n } \dfrac{ S_v}{\sum_{k=1}^{|v|} L(\alpha^{-k})} = \sup_{\mathcal{R} \in \partial \mathbb{T}} \limsup_{v \in \mathcal{R}} \dfrac{S_v}{\sum_{k=1}^{|v|} L(\alpha^{-k})}= \sup_{\mathcal{R} \in \partial \mathbb{T}} \liminf_{v \in \mathcal{R}} \dfrac{S_v}{\sum_{k=1}^{|v|} L(\alpha^{-k})}=1.
  \end{equation}
  \end{proposition}
 
Note that for heavy tailed displacements, the speed is of linear order in log-log scale; whereas for rapidly decaying displacements satisfying \cref{assumpF}, the speed is of linear order in log-scale. For very rapidly decaying displacements, the speed depends on the nature of the function $L$.  As for example, under \cref{case1L} the speed is of some polynomial order.

The proofs of these three results are very similar in flavour. The upper bound on the corresponding notion of the cloud speed follows from the asymptotics of the maximum displacement, investigated in the previous sections; whereas lower bound on the sustainable speed follows from constructing a ray along which most of the displacements are very large. This construction is very similar to what we did in the proof of lower bound in \Cref{verylighttail:mainthm}.
 
\begin{proof}[Proof of \Cref{speedheavy}]
Taking logarithm on both sides of the main assertion of \cref{aslim:regtail}, we obtain $ n^{-1} \log \log M_n \stackrel{a.s.}{\longrightarrow} -\log \alpha$. It is therefore enough to show that 
$$ \sup_{\mathcal{R} \in \partial \mathbb{T}} \liminf_{v \in \mathcal{R}} \dfrac{\log \log S_v}{|v|} \geq  - \log \alpha. $$
Fix $\delta \in (0, 1-\alpha)$ and recursively define the following sets of vertices. Set $B_{0}=A_{0} :=D_0$, which contains only the root. For any $j \geq 1$, set $B_{j}$ to be the set children of the vertices in $A_{j-1}$. Define $A_{j}$ to be the those particles in $B_{j}$ whose displacement from their parents are among the maximum $|A_{j}|$ many of those in $B_{j}$; break the ties uniformly at random if needed. Here
$$ |A_{j}|:= \sum_{v \in A_{j-1}} \lfloor |C(v)|^{1-\delta} \rfloor.$$
It is immediately observed that the sequence $\left\{|A_{j}| : j \geq 0\right\}$ has the same law as $\left\{\widetilde{Z}_{j} : j \geq 0\right\}$, where $\widetilde{Z}_{j}$ is the size of the $j$-th generation of the homogeneous branching process, started from one particle, and where the particles in $n$-th generation produces i.i.d. many off-springs (independent of the structure of the tree until that generation) having distribution $G_{(1-\delta)}$, as defined before \cref{thin1}. We apply \cref{davies} to obtain the following.
$$ \mathbb{P} \left[\lim_{n \to \infty}  \left( \dfrac{\alpha}{1-\delta} \right)^n \log |A_{n}| = \widetilde{W} > 0 \right] =: \tilde{q} >0.$$
Let $\tilde{E}$ denote the event in the above equation. 
A standard application of \cref{strongheavy1} implies that $\log |B_{j}| \stackrel{a.s.}{\sim} \alpha^{-1}\log |A_{j-1}| \stackrel{a.s.}{\sim} (1-\delta)^{-1}\log |A_{j}|$ as $j \to \infty$, on $\tilde{E}$. We can then employ \cref{normordernewreg} to yield the following.
\begin{equation}{\label{minconv1}}
\min_{v \in A_{j}} \log X_{e_v} \stackrel{a.s.}{\sim}\delta \beta^{-1} \log |B_{j}| \stackrel{a.s.}{\sim} \delta(1-\delta)^{-1}\beta^{-1} \log |A_j| \stackrel{a.s.}{\sim} \delta\beta^{-1}(1-\delta)^{j-1} \alpha^{-j}\widetilde{W}, \; \text{on } \tilde{E},
\end{equation}
and since $1-\delta > \alpha$, we have
$$ \max_{j=1}^k \min_{v \in A_{j}} \log X_{e_v} \stackrel{a.s.}{\sim} \delta \beta^{-1}(1-\delta)^{k-1} \alpha^{-k}\widetilde{W}, \; \text{on } \tilde{E}. $$
Now take any infinite ray $\mathcal{R}^*$ whose vertices lie in the sets $A_j$s. This is possible on the event $\tilde{E}$ by construction and hence for any $v \in \mathcal{R}^*$,
\begin{align*}
\dfrac{\log \log S_v}{|v|} \geq \dfrac{1}{|v|} \log \log \sum_{k=1}^{|v|} \min_{u \in A_{k}} X_{e_u} \geq \dfrac{1}{|v|} \log \log \max_{k=1}^{|v|} \min_{u \in A_{k}} X_{e_u} &= \dfrac{1}{|v|} \log  \max_{k=1}^{|v|} \min_{u \in A_{k}} \log X_{e_u} \\
& = \dfrac{1}{|v|} \left( \log \dfrac{\delta\widetilde{W}}{(1-\delta)\beta} + |v| \log \dfrac{(1-\delta)}{\alpha} + o(1)\right),
\end{align*}
where $o(1)$ refers to a sequence of random variables which almost surely converges to $0$ on $\tilde{E}$. Therefore,
$$ \mathbb{P} \left[\sup_{\mathcal{R} \in \partial \mathbb{T}} \liminf_{v \in \mathcal{R}} \dfrac{\log \log S_v}{|v|} \geq   \log \dfrac{1-\delta}{\alpha} \bigg \rvert \mathbb{T} \text{ survives} \right] \geq \mathbb{P}(\tilde{E} \mid \mathbb{T} \text{ survives} ) = \dfrac{\tilde{q}}{\mathbb{P}(\mathbb{T} \text{ survives})} >0. $$
Since, conditioned on $\mathbb{T}$, the event inside the left-most term above lies in the tail $\sigma$-algebra generated by the i.i.d. displacement variables, we can invoke Kolmogorov $0-1$ law to conclude that almost surely conditioned on survival of the tree $\mathbb{T}$, 
 $$ \sup_{\mathcal{R} \in \partial \mathbb{T}} \liminf_{v \in \mathcal{R}} \dfrac{\log \log S_v}{|v|} \geq   \log \dfrac{1-\delta}{\alpha}. $$
 Taking $\delta \downarrow 0$, we complete the proof.
\end{proof} 
 
\begin{proof}[Proof of \Cref{speedlighttail}]
Taking logarithm on both sides on \cref{eq:deterministic} and using the fact that $\log L(\alpha^{-n}) \sim (-n/r) \log \alpha$ as $n \to \infty$, a fact which follows from the regular variation of $L$, we can conclude the upper bound :
$$ \limsup_{n \to \infty} \dfrac{\log M_n}{n} = -\dfrac{1}{r} \log \alpha, \; \text{ almost surely on survival of the tree }.$$
For the lower bound on burst speed, we use the same construction as in the proof of \cref{speedheavy}. \cref{minconv1} here changes to the following asymptotics by virtue of \cref{normordernew}.
\begin{equation}
\min_{v \in A_j} X_{e_v} \stackrel{a.s.}{\sim} L(\delta \log |B_j|) \stackrel{a.s.}{\sim} \left(\dfrac{\delta}{1-\delta}\right)^{1/r}\widetilde{W}^{1/r}L((1-\delta)^j \alpha^{-j}),\; \text{ on } \tilde{E}, 
\end{equation}
where we used the fact that $L$ is regularly varying at $\infty$ with index $1/r$. \cref{regulargeom} and \cref{cezaro} can now be applied to obtain the following.
$$  \sum_{j=1}^n \min_{v \in A_j} X_{e_v} \stackrel{a.s.}{\sim} \sum_{j=1}^n \left(\dfrac{\delta}{1-\delta}\right)^{1/r}\widetilde{W}^{1/r}L((1-\delta)^j \alpha^{-j})  \sim \left(\dfrac{\delta}{1-\delta}\right)^{1/r}\widetilde{W}^{1/r} \dfrac{L((1-\delta)^n \alpha^{-n})}{1-\alpha^{1/r}(1-\delta)^{-1/r}},  \; \text{ on } \tilde{E},$$
Hence as $|v| \to \infty$ via $\mathcal{R}^*$, we have
\begin{align*}
\dfrac{\log S_v}{|v|} \geq \dfrac{1}{|v|} \log \sum_{k=1}^{|v|} \min_{u \in A_{k}} X_{e_u} \stackrel{a.s.}{\sim} \dfrac{1}{|v|}\log L((1-\delta)^{|v|} \alpha^{-|v|}) \sim \dfrac{1}{r} \dfrac{\log \left((1-\delta)^{|v|} \alpha^{-|v|} \right)}{|v|} = \dfrac{1}{r} \log \left( \dfrac{1-\delta}{\alpha}\right).
\end{align*}
We can now finish th proof by the same argument as used in \cref{speedheavy}.
\end{proof}

\begin{proof}[Proof of \cref{speedverylight}]
This proof basically is a corollary of \cref{verylighttail:mainthm} as all the work had already been done there. The upper bound on the cloud speed follows directly from the assertion of \cref{verylighttail:mainthm}. Continuing with the notation introduced during its proof, note that for any ray $\mathcal{R}^*$ in the subtree formed by the subsets $\left\{A_{j, \eta} : j \geq 0\right\}$, we have the following almost surely on $E_{\eta}$.
$$ \liminf_{v \in \mathcal{R}^*} \dfrac{S_v}{\sum_{k=1}^{|v|} L(\alpha^{-k})} \geq \liminf_{v \in \mathcal{R}^*} \dfrac{\sum_{k=1}^{|v|} \min_{u \in A_{k,\eta}} X_{e_u}} {\sum_{k=1}^{|v|} L(\alpha^{-k})} \geq \liminf_{k \to \infty} \dfrac{ \min_{u \in A_{k,\eta}} X_{e_u}} { L(\alpha^{-k})} \geq \liminf_{k \to \infty} \dfrac{ L(\alpha^{-k}e^{-\kappa_k(\eta)})} { L(\alpha^{-k})} = \varphi(\eta). $$
The rest of the proof follows by arguments similar to those applied in the proof of \cref{speedheavy} and then taking $\eta \downarrow 0$. 
\end{proof}



\section{Appendix}
\label{app:prooflems}
In the Appendix we provide proofs for \cref{strongheavy1}, \cref{inhom} and \cref{normordernew}. 
We did not employ \cref{inhom} and \cref{strongheavy1} in its full generality, rather for a particularly engineered choice of the progeny sequence. Consider the progeny distribution $G$ of the original homogenous tree, satisfying \cref{davies_assump} with moment index $\alpha \in (0,1).$  We applied \cref{inhom} with $G_n=G_{(\psi_n)}$ where $\left\{\psi_n : n \geq 0\right\}$ is a sequence in $[0,1]$, bounded away from $0$. The validity of such choice is justified by \cref{thin1}, which is stated and proved here.

First we prove Lemma \ref{strongheavy1} which was used in the proof of Theorem \ref{lighttailmain}.

\begin{proof}[Proof of \cref{strongheavy1}]
Define $\alpha_{max} := \sup_{n \geq 0} \alpha_n <1$. By \cref{assump:inhom} and \cite[Lemma 1]{Davies} , there exists positive constants $0<c_1,c_2< \infty$ and
  $\lambda_0 \in (0,1)$ such that for all $\lambda \in (0, \lambda_0)$ the following holds.
  \begin{align}{\label{laplace:estimate:davies1}}
    c_1\lambda^{\alpha_n+\gamma(1/\lambda)} \leq 1- \Psi_n(\lambda) \leq  c_2\lambda^{\alpha_n-\gamma(1/\lambda)}, \; \forall \; n \geq 0,
  \end{align}
  where $\Psi_n$ is the Laplace transform of the distribution function $G_{n}$ and $\gamma$ is some function satisfying \cref{item:D1}, \cref{item:D2} and \cref{item:D3}. 
  
  The assumption~\ref{item:D3} and monotonicity of $\gamma$ guarantees that
  \begin{equation}{\label{summability}}
  \sum_{n \geq 1} \gamma\left( e^{c_3 e^{c_4 n}} \right) \leq \int_{0}^{\infty}  \gamma\left( e^{c_3 e^{c_4 x}} \right)  \, dx = \dfrac{1}{c_4} \int_{\log c_3}^{\infty} \gamma(e^{e^x})\, dx < \infty,
  \end{equation}
for any $c_3,c_4 >0$. By our assumption $\zeta :=\liminf_{n \to \infty} n^{-1} \log \log \zeta_{n} >0$, almost surely and hence \cref{summability}, applied for $c_3=(\alpha^{-1}-1)$ and $c_4=\zeta/2$, implies that 
$$ \sum_{n \geq 0} \gamma\left( {\zeta_n}^{(\alpha^{-1}-1)} \right) < \infty, \; \text{ almost surely}.$$

Set $\varepsilon_n = \max\left(\min \left(2\alpha_n^{-2}\gamma\left( {\zeta_n}^{(\alpha^{-1}-1)} \right) , \alpha_n^{-1},1 \right), (n+1)^{-2} \right) $ and $\upsilon_n = \varepsilon_n \alpha_n^{2}/2  $, for all $n \geq 0$. These choices guarantee that almost surely, $\sum_{n \geq 1} \varepsilon_n < \infty$ and for all $n \geq 0$,  $ \upsilon_n/\alpha_n - \varepsilon_n \upsilon_n < \varepsilon_n \alpha_n/2$. By construction, almost surely,  $\upsilon_n \geq \gamma\left( {\zeta_n}^{(\alpha^{-1}-1)} \right)$ for all large enough $n$ and hence, 
$$ \gamma \left(\zeta_{n}^{(\alpha_n^{-1}+\varepsilon_n)} \right) \leq \gamma \left(\zeta_{n}^{(\alpha_n^{-1}-\varepsilon_n)} \right) \leq \gamma_1\left( {\zeta_n}^{(\alpha^{-1}-1)} \right) \leq \upsilon_n.$$
The fact that $(\alpha_n^{-1}-\varepsilon_n) \log \zeta_n \stackrel{a.s.}{\longrightarrow} \infty$ guarantee that 
$ \zeta_{n}^{-\alpha_n^{-1}-\varepsilon_n}, \zeta_{n}^{-\alpha_n^{-1}+\varepsilon_n} < \lambda_0$, for all large enough $n$ almost surely. Combining this observation with \cref{laplace:estimate:davies1} yields the following. Almost surely, for all large enough $n$, 
\begin{equation}{\label{laplace1}}
 c_1\left(  \zeta_{n}^{-\alpha_n^{-1}+\varepsilon_n}\right)^{\alpha_n+\upsilon_n} \leq 1- \Psi_n\left(  \zeta_{n}^{-\alpha_n^{-1}+\varepsilon_n}\right),\;\; 1- \Psi_n\left(  \zeta_{n}^{-\alpha_n^{-1}-\varepsilon_n}\right) \leq   c_2\left(  \zeta_{n}^{-\alpha_n^{-1}-\varepsilon_n}\right)^{\alpha_n-\upsilon_n}.
\end{equation}


  With this preliminary set-up, we claim that, almost surely for all large enough $n$,
  \begin{equation}{\label{claim}}
  \left(\alpha_n^{-1}-\varepsilon_n \right) \log \zeta_{n} \leq \log \sum_{i=1}^{\zeta_{n}} L_{n,i} \leq \left(\alpha_n^{-1}+\varepsilon_n \right) \log \zeta_{n}.
  \end{equation}
  Before proving the claim, let us first see how this claim helps us to achieve our target. Denote by $E_n$ the intersection of $(\zeta_n >1)$ and the event in \cref{claim}. Note that, \cref{claim} implies $N:= \sup \left\{n \geq 0 : \mathbbm{1}_{E_n}=1\right\} < \infty$, almost surely. Therefore, 
  $$ \sum_{n \geq N+1} \bigg \rvert \alpha_n \dfrac{\log \sum_{i=1}^{\zeta_{n}} L_{n,i}}{\log \zeta_{n}} - 1\bigg \rvert \leq \sum_{n \geq N+1} \alpha_n\varepsilon_n  < \infty,$$
  almost surely, since $\alpha_n \leq \alpha_0$, $\sum_{n \geq 1} \varepsilon_n < \infty$ with probability $1$. A standard analysis fact now yields that
  $$ 0 < \prod_{n \geq N+1} \left( \alpha_n \dfrac{\log \sum_{i=1}^{\zeta_{n}} L_{n,i}}{\log \zeta_{n}} \right)< \infty, \; \text{almost surely }.$$

  Let us now proceed to prove the lower bound in \cref{claim}. The following series of
  inequalities hold true for large enough $n$, almost surely. 
  \begin{align}
    \mathbb{P} \left[ \log \sum_{i=1}^{\zeta_{n}} L_{n,i} \leq \left(\alpha_n^{-1}-\varepsilon_n \right) \log \zeta_{n} \Bigg \rvert \mathcal{F}_{n}\right]
    & =  \mathbb{P} \left[ \exp \left( -\zeta_{n}^{-\alpha_n^{-1}+\varepsilon_n}\sum_{i=1}^{\zeta_{n}} L_{n,i} \right) \geq 1/e \Bigg \rvert \mathcal{F}_{n} \right] \nonumber \\
    & \leq e\left(\Psi_n \left( \zeta_{n}^{-\alpha_n^{-1}+\varepsilon_n}\right)\right)^{\zeta_{n}}\nonumber \\
    & \leq   \exp \left( 1+\zeta_{n}\log \left( 1- c_1\zeta_{n}^{(-\alpha_n^{-1}+\varepsilon_n)(\alpha_n+\upsilon_n))}\right)\right) \nonumber \\
    & \leq   \exp \left(1+ \zeta_{n}\log \left( 1- c_1\zeta_{n}^{-1+\varepsilon_n \alpha_n/2}\right)\right) 
      \leq  \exp \left(1-c_1\zeta_{n}^{\varepsilon_n \alpha_n/2} \right) \label{lower:lemma451} ,
  \end{align}
  where the penultimate inequality follows from the fact that
  $\varepsilon_n \alpha_n/2 >\upsilon_n/\alpha_n - \varepsilon_n \upsilon_n$ and the last one by the bound
  $-\log(1-x)\geq x$. Observe that, almost surely,
  \begin{equation}{\label{growth}}
  \liminf_{n \to \infty} n^{-1}\log \zeta_{n}^{\varepsilon_n \alpha_n/2} = \dfrac{1}{2}\liminf_{n \to \infty} n^{-1} \epsilon_n \alpha_n\log \zeta_{n} \geq  \dfrac{\alpha}{2}\liminf_{n \to \infty} n^{-3}   \log \zeta_{n} = \infty,
  \end{equation}
  and hence the last expression in (\ref{lower:lemma451}) is summable almost surely. Applying Levy's extension of Borel-Cantelli Lemma  we conclude the proof of the lower bound.
  

 To prove the upper bound in \cref{claim}, we establish similar kind of inequalities which hold almost surely for large enough $n$. 
  \begin{align}
   \mathbb{P} \left[ \log \sum_{i=1}^{\zeta_{n}} L_{n,i} \geq \left(\alpha_n^{-1}+\varepsilon \right) \log \zeta_{n} \Bigg \rvert \mathcal{F}_{n} \right]
    & =  \mathbb{P} \left[ 1-\exp \left( -\zeta_{n}^{-\alpha_n^{-1}-\varepsilon_n}\sum_{i=1}^{\zeta_{n}} L_{n,i} \right) \geq 1- 1/e   \Bigg \rvert \mathcal{F}_{n}  \right] \nonumber \\
    & \leq (1-1/e)^{-1} \left[1-\left(\Psi_n \left( \zeta_{n}^{-\alpha_n^{-1}-\varepsilon_n}\right)\right)^{\zeta_{n}} \right] \nonumber \\
    & \leq (1-1/e)^{-1} \left[1-\left(1- c_2 \left( \zeta_{n}^{-\alpha_n^{-1}-\varepsilon_n}\right)^{\alpha_n-\upsilon_n}\right)^{\zeta_{n}} \right] \nonumber \\
    & \leq (1-1/e)^{-1} \left[1-\left(1- c_2  \zeta_{n}^{-1-\varepsilon_n \alpha_n/2}\right)^{\zeta_{n}} \right] \label{last1} \\
    & \leq (1-1/e)^{-1} c_2 \zeta_{n}^{-\varepsilon_n \alpha_n/2}, \label{lastestim1}
  \end{align}
  where (\ref{last1}) is implied by the fact that  $\upsilon_n/\alpha_n + \varepsilon_n \upsilon_n < \varepsilon_n \alpha_n/2$, and (\ref{lastestim1}) uses the  estimate $1-\left(1- x\right)^\kappa \leq \kappa x$ for all $x\in(0,1]$ and
      $\kappa>0$.  Almost sure Summability of the expression in (\ref{lastestim1}) follows from \cref{growth}.  Applying Levy's extension of Borel-Cantelli Lemma we conclude the proof of the upper bound.
\end{proof}

\begin{proof}[Proof of \cref{inhom}] 
\cref{assump:inhom} guarantees that $\alpha_n  \leq \alpha_{max} < 1$, for all $n \geq 0$. By \Cref{stochdom}, it is  possible to get hold of a distribution function $G_{max}$, supported on non-negative integers, that satisfies \cref{davies_assump} with moment index $\alpha^{*} \in (\alpha_{max},1)$ and $G_{max} \geq G_n$, pointwise, for all $n \geq 0$. Therefore, we can get a coupling measure $\nu_n$ on $\mathbb{Z}^2$ such that 
$$ (Z_{1,1},Z_{1,2}) \sim \nu_n \Rightarrow Z_{1,1} \sim G_{max}, Z_{1,2} \sim G_n, Z_{1,1} \leq Z_{1,2} \text{ almost surely }.$$

Consider the following triangular array of pairs of random variables $\left\{(Z_{n,i,1},Z_{n,i,2}) : n \geq 0, i \geq 1\right\}$ where $(Z_{n,i,1},Z_{n,i,2}) \sim \nu_n$, independent of every other pair in this array. Define, 
$$ Z_{0,j}:=1, ; Z_{n,j}:= \sum_{i=1}^{Z_{n-1,j}} Z_{n-1,i,j}, \; \forall \; n \geq 1, \; j=1,2.$$
The following observations are immediate from the construction.
\begin{enumerate}
\item $\left\{Z_{n,1} : n \geq 0\right\}$ is the generation sizes of a homogeneous branching tree with progeny distribution $G_{max}$, starting with one particle in $0$-th generation. Since $G_{max}$ satisfies \cref{davies_assump} with moment index $\alpha_{max}$, we can apply \cref{davies} to conclude that there exists a non-negative non-degenerate random variable  
$W_1$ such that $n^{-1} \log \log Z_{n,1}$ converges almost surely to  $ -\log \alpha_{max} >0$, on the event $(W_1>0)$. Note that $\mathbb{P}(W_1>0)>0$.

\item The sequence $\left\{Z_{n,2} : n \geq 0\right\}$ is the generation sizes of an in-homogeneous branching process with progeny distribution for the particles in the $n$-th generation being $G_{n}$. In other words, $ \left\{Z_{n,2} : n \geq 0\right\}  \stackrel{d}{=} \left\{Z_n : n \geq 0\right\}$, as defined in the statement of \cref{inhom} and hence it is enough to prove the assertion of the theorem for  $ \left\{Z_{n,2} : n \geq 0\right\}$.

\item  $(Z_{n,i,1},Z_{n,i,2}) \sim \nu_n \Rightarrow Z_{n,i,1} \leq Z_{n,i,2}, \; \forall \; i,n \Rightarrow Z_{n,1} \leq Z_{n,2}, \; \forall \; n \geq 0.$ Therefore, 
$$ \liminf_{n \to \infty} \dfrac{1}{n} \log \log Z_{n,2} \geq \liminf_{n \to \infty} \dfrac{1}{n} \log \log Z_{n,1} = - \log \alpha_{max} > 0, \; \text{ on } (W_1>0).$$ 
\end{enumerate}

We can now apply \cref{strongheavy1} with $\mathcal{F}_n$ being the $\sigma$-algebra generated by the collection of random variables $\left\{Z_{k,i,1},Z_{k,i,2} : 0 \leq k \leq n-1\right\}$ with $\zeta_n=Z_{n,2}$ and $L_{n,i} =Z_{n,i,2}$, for all $n \geq 0$. We conclude that there exists a non-negative integer valued random variable $N$, satisfying $\mathbb{P}(N = \infty, W_1 >0)=0$, such that the following occurs
$$ \left(\prod_{m=N+1}^{n} \alpha_m \right) \log Z_{n+1,2}(\omega) = \log Z_{N+1,2}(\omega)\prod_{m=N+1}^{n} \left(  \dfrac{\alpha_m \log Z_{m+1,2}(\omega)}{\log Z_{m,2}(\omega)}\right) \longrightarrow W_2(\omega) \in (0, \infty), \; \text{ as } n \to \infty,$$
for almost all $\omega \in (W_1 >0)$. Here $W_2$ is some positive almost surely finite random variable. This implies our final assertion after we take $E:=(W_1>0)$ and define $W^{*}$ as follows.
$$ W^*(\omega) = \begin{cases}
W_2(\omega) \prod_{m=0}^{N(\omega)} \alpha_m, & \text{ if } N(\omega) < \infty, W_1(\omega) >0, \\
1, & \text{ otherwise }.
\end{cases}$$
\end{proof}

\begin{lemma}
  \label{thin1}
  Let $G$ be a distribution function, supported on non-negative real line but not necessarily on the set of non-negative integers, satisfying \cref{item:D1,item:D2,item:D3,item:D4} with moment index $\alpha \in (0,1)$.
   Fix $\delta_0 \in (0,1]$. Then there exists $\gamma_1 : \mathbb{R}^{+} \to \mathbb{R}^{+}$, satisfying \cref{item:D1}, \cref{item:D2} and \cref{item:D3}, such that for some $x_1>1$ and for any $x \geq x_1$, we have
   $$ x^{-\gamma_1(x)} \leq x^{\alpha/\delta}(1-G_{(\delta)}(x)) \leq x^{\gamma_1(x)}, \; \forall \; \delta \in [\delta_0,1].$$
   In particular, $G_{(\delta_0)}$ also satisfies Assumption~\ref{davies_assump} with
  moment index $\alpha/\delta_0$, provided $\delta_0 > \alpha$.
 \end{lemma}

\begin{proof}[ Proof of Lemma~\ref{thin1}]
We have, by assumption, $\gamma : \mathbb{R}^{+} \to \mathbb{R}^{+}$, satisfying \cref{item:D1}, \ref{item:D2} and \ref{item:D3}, such that 
$$ x^{-\gamma(x)} \leq x^{\alpha}(1-G(x)) \leq x^{\gamma(x)}, \; \forall \; x \geq x_0,$$
for some $x_0 \in (1,\infty)$. Fix $\delta \in [\delta_0,1].$ For any $x \geq x_0 >1$, we have $x^{1/\delta} > x_0^{1/\delta} \geq x_0$ and therefore can write the following. 
\begin{align}{\label{upper}}
1-G_{(\delta)}(x) = \mathbb{P}\left(\lfloor Z^{\delta} \rfloor > x \right) \leq \mathbb{P} \left( Z > x^{1/\delta}\right) = 1-G(x^{1/\delta}) \leq x^{-\alpha/\delta}x^{\gamma(x^{1/\delta})/\delta} \leq x^{-\alpha/\delta}x^{\gamma(x)/\delta}, 
\end{align}
where the last inequality follows from non-monotonicity of $\gamma$ and the fact that $\delta \leq 1$.
Monotonicity of $\gamma$ also implies that
$$ 1 > \eta := \inf_{\delta \in [\delta_0,1]} \inf_{x \geq 1} \left( (x+1)/x\right)^{-\alpha/\delta - \gamma(x)/\delta} \geq   \inf_{\delta \in [\delta_0,1]} \inf_{x \geq 1} \left( (x+1)/x\right)^{-\alpha/\delta - \gamma(1)/\delta}= 2^{-\alpha/\delta_0 - \gamma(1)/\delta_0} >0, $$
and hence for all $x \geq  x_0$,
\begin{align}
1-G_{(\delta)}(x) = \mathbb{P}\left(\lfloor Z^{\delta} \rfloor > x \right) \geq \mathbb{P} \left( Z^{\delta} > x+1\right) = 1-G((x+1)^{1/\delta}) &\geq (x+1)^{-\alpha/\delta}(x+1)^{-\gamma((x+1)^{1/\delta})/\delta} \nonumber \\
& \geq (x+1)^{-\alpha/\delta}x^{-\gamma(x)/\delta}, \label{nondec} \\
& \geq \eta x^{-\alpha/\delta}x^{-\gamma(x)/\delta}, \label{lower} 
\end{align}
where \cref{nondec} uses monotonicity of the map $y \mapsto y^{\gamma(y)}$. Combining \cref{upper} and \cref{lower}, we can write the following for any $x \geq x_0$ and $\delta \in [\delta_0,1]$.
\begin{equation}{\label{combin}}
 x^{-\gamma(x)/\delta_0}\eta \leq x^{\alpha/\delta}\left(1-G_{(\delta)}(x) \right)  \leq  x^{\gamma(x)/\delta} \leq  x^{\gamma(x)/\delta_0}\eta^{-1}.
\end{equation}
 Defining
  $\gamma_1 : (0, \infty) \to (0,\infty)$ to be
  $$
  \gamma_1(x) := \begin{cases} \dfrac{\gamma(x)}{\delta_0}  -\dfrac{\log \eta}{\log x}, \;\; \text{ if } x \geq x_0, \\
    \gamma_1(x_0), \;\; \hspace{0.42 in} \text{ if } x < x_0, \end{cases}
  $$
  it follows immediately that for all $x \geq x_0$ and $\delta \in [\delta_0,1]$, 
  $$ x^{-\alpha/\delta-\gamma_1(x)} \leq  1-G_{(\delta)}(x) \leq x^{-\alpha/\delta + \gamma_1(x)} .$$
  It is now enough to show that \cref{item:D1}, \cref{item:D2} and \cref{item:D3} are also satisfied if we replace $\gamma$ by $\gamma_1$. The monotonicity of $\gamma$ implies that $\gamma_1$ is also non-increasing. On the other hand, for $x \geq x_0$,
  $x^{\gamma_1(x)} = \eta^{-1} x^{\gamma(x)/\delta_0}$ which guarantees that $x \mapsto x^{\gamma_1(x)}$ is
  non-decreasing. Furthermore,
  $$
  \int_{\log \log x_0}^{\infty} \gamma_1(e^{e^x}) \,dx =\dfrac{1}{\delta_0}\int_{\log \log x_0}^{\infty}
  \gamma(e^{e^x}) \,dx -\log \eta \int_{\log \log x_0}^{\infty}  \exp(-x) \,dx < \infty,
  $$
  as $\int_{0}^{\infty} \gamma(e^{e^x}) \,dx < \infty$. Therefore,
  $\int_{0}^{\infty} \gamma_1(e^{e^x}) \,dx < \infty$ and  this proves the lemma.
\end{proof}

\begin{lemma}{\label{stochdom}}
Let $\left\{G_n : n \geq 0\right\}$ satisfies \Cref{assump:inhom}. Then for any $\beta \in \left( \sup_{n \geq 0} \alpha_n,1\right)$, there exists $G_{max}$, a distribution function supported on the set of non-negative integers, such that $G_{max}$ satisfies \Cref{davies_assump} with moment index $\beta $  and $G_{max} \geq G_n$, pointwise, for all $n \geq 0$.
\end{lemma}
\begin{proof}
Let $\alpha := \sup_{n \geq 0} \alpha_n <1$ and fix $\beta \in \left( \alpha,1\right)$. Get $x_1$ large enough such that $ \beta > \alpha+\gamma(x)$, for all $x \geq x_1$.  This is possible since $\gamma$ satisfies \Cref{item:D1} and \Cref{item:D3}; and hence $\gamma(x) \downarrow 0$ as $x \uparrow \infty$. Set $\tilde{x}=  x_0 \vee x_1 \vee 1$ and define
$$ G^*(x) := \begin{cases}
\sup_{n \geq 0} G_n(x), & \text{ if } x < \tilde{x}, \\
1- x^{-\beta}, & \text{ if } x \geq \tilde{x}.
\end{cases} $$
Clearly, $G^*(0)=0$ and $G^*(\infty)=1$. Right continuity of $G^*$ on $[\tilde{x}, \infty)$ is obvious whereas on $(-\infty,\tilde{x})$ it is guaranteed by the fact that $G_n$ is supported on the set of integers for all $n \geq 0$. Moreover,  
$$ G^*(\tilde{x}-) \leq \sup_{n \geq 0} G_n(\tilde{x}) \leq \sup_{n \geq 0} \left( 1-\tilde{x}^{-\alpha_n-\gamma(\tilde{x})}\right)  \leq 1- \tilde{x}^{-\alpha-\gamma(\tilde{x})} \leq 1- \tilde{x}^{-\beta} = G^*(\tilde{x}),$$
implying that $G^*$ is indeed a distribution function, supported on the non-negative real line. It is obvious from the definition that $G^*$  satisfies \Cref{item:D4} with moment index $\beta$. Finally, for any $x \geq \tilde{x}$, 
$$ 1-G^*(x) = x^{-\beta} \leq  x^{-\alpha_n - \gamma(x)} \leq 1-G_n(x),$$
guaranteeing that $G^* \geq G_n$ for all $n \geq 0$, pointwise. Let $G_{max}$ be the distribution function of $\lfloor Z \rfloor$ where $Z \sim G^*$. $G_{max}$ is clearly supported on the set of non-negative integers; $G_{max} \geq G^* \geq G_n$ for all $n \geq 0$ and \Cref{thin1} guarantees that $G_{max}$ satisfies \Cref{davies_assump} with moment index $\beta$. This completes the proof.
\end{proof}

\begin{proof}[Proof of \cref{normordernew}]

Consider the case of $\psi_n$ being bounded away from $1$. Fix any positive $0<\varepsilon < 1,$ small enough. Let us first prove the lower bound of the limit.   
\begin{align*}
\mathbb{P} \left[ G_{l_n:\zeta_n} \leq L\left((1-\varepsilon)\psi_n\log \zeta_n\right) \Big \rvert \mathcal{F}_n \right]& \leq  \mathbb{P} \left[ \sum_{i=1}^{\zeta_n} \mathbbm{1} \left( G_{n,i} \geq L\left((1-\varepsilon)\psi_n\log \zeta_n\right)\right) \leq l_n \Bigg \rvert \mathcal{F}_n  \right] \\
&= \mathbb{P} \left[ \text{Binomial} \left(\zeta_n,p_n \right) \leq l_n \Big \rvert \mathcal{F}_n \right]
\end{align*}
where $p_n := \bar{F} \left(L\left((1-\varepsilon)\psi_n\log \zeta_n \right) \right) = \exp\left(-K \circ L\left((1-\varepsilon)\psi_n\log \zeta_n \right) \right)$. By assumption on $F$, we have $K(L(x)) \sim x$ as $x \to \infty$. Therefore, almost surely, for large enough $n$, we can say $\exp\left(-(1-\varepsilon/2)\psi_n\log \zeta_n \right) \leq p_n \leq \exp\left(-(1-2\varepsilon)\psi_n\log \zeta_n \right),$ and hence
 $l_n \leq \zeta_n^{1-\psi_n(1-\varepsilon/4)} \leq \zeta_n^{1-\psi_n(1-\varepsilon/2)} \leq \zeta_np_n \leq \zeta_n^{1-\psi_n(1-2\varepsilon)}.$
Using Chebyshev's Inequality, we now obtain the following almost surely for all large enough $n$. 
\begin{align*}
\mathbb{P} \left[ G_{l_n:\zeta_n} \leq L\left((1-\varepsilon)\psi_n\log \zeta_n \right) \Big \rvert \mathcal{F}_n \right]
& \leq  \dfrac{\zeta_np_n(1-p_n)}{(\zeta_np_n-l_n)^2} \leq  \dfrac{\zeta_n^{1-\psi_n(1-2\varepsilon)}}{\left(\zeta_n^{1-\psi_n(1-\varepsilon/2)} - \zeta_n^{1-\psi_n(1-\varepsilon/4)} \right)^2} \leq  4\zeta_n^{-1+\psi_n(1+\varepsilon)}. 
\end{align*}
The fact that almost surely $\zeta_n^{\psi_n} = e^{\psi_n \log \zeta_n} \to \infty$ was crucial in deriving the last inequality in the above line. The last term being summable almost surely (since $\liminf_{n \to \infty} n^{-1} \log \zeta_n > 0$) for small enough $\varepsilon$, we use Levy's extension of Borel-Cantelli Lemma to conclude that for small enough $\varepsilon$,
\begin{equation}{\label{firstlim}}
 \liminf_{n \to \infty} \dfrac{ G_{l_n:\zeta_n}}{L\left((1-\varepsilon)\psi_n\log \zeta_n \right)} \geq 1, \; \text{ almost surely }.
\end{equation}
Since $\psi_n \log \zeta_n \sim \log l_n \to \infty$, almost surely, we have
\begin{equation}{\label{secondlim}}
\lim_{\varepsilon \downarrow 0} \lim_{n \to \infty} \dfrac{L\left(\psi_n\log \zeta_n \right)}{L\left((1-\varepsilon)\psi_n\log \zeta_n \right)} = 1, \; \text{ almost surely }.
\end{equation}
Here we have made use of the assumption that $\lim_{t \to 1} \lim_{x \to \infty} L(tx)/L(x)=1.$ Combining \cref{firstlim} and \cref{secondlim},  we conclude this case after taking $\varepsilon \downarrow 0$. Similarly for the upper bound, we have the following.
\begin{align*}
\mathbb{P} \left[ G_{l_n:\zeta_n} \geq L\left((1+\varepsilon)\psi_n\log \zeta_n \right) \Big \rvert \mathcal{F}_n \right]& \leq  \mathbb{P} \left[ \sum_{i=1}^{\zeta_n} \mathbbm{1} \left( G_{n,i} \geq L\left((1+\varepsilon)\psi_n\log \zeta_n \right) \right) \geq l_n \Big \rvert \mathcal{F}_n  \right] \\
&= \mathbb{P} \left[ \text{Binomial} \left(\zeta_n,q_n \right) \geq l_n  \Big \rvert \mathcal{F}_n \right]
\end{align*}
where $q_n := \bar{F} \left(L\left((1+\varepsilon)\psi_n\log \zeta_n \right) \right) = \exp\left(-K \circ L\left((1+\varepsilon)\psi_n\log \zeta_n \right) \right)$. Therefore, almost surely, for large enough $n$, we can say $\exp\left(-(1+2\varepsilon)\psi_n\log \zeta_n \right) \leq q_n \leq \exp\left(-(1+\varepsilon/2)\psi_n\log \zeta_n \right),$ and hence
 $l_n \geq \zeta_n^{1-\psi_n(1+\varepsilon/4)} \geq \zeta_n^{1-\psi_n(1+\varepsilon/2)} \geq \zeta_nq_n 
 .$
Using Chebyshev's Inequality again, we  obtain the following almost surely for all large enough $n$. 
\begin{align*}
\mathbb{P} \left[ G_{l_n:\zeta_n} \geq  L\left((1+\varepsilon)\psi_n\log \zeta_n \right) \Big \rvert \mathcal{F}_n \right]
& \leq  \dfrac{\zeta_nq_n(1-q_n)}{(l_n-\zeta_nq_n)^2} \leq  \dfrac{\zeta_n^{1-\psi_n(1+\varepsilon/2)}}{\left( \zeta_n^{1-\psi_n(1+\varepsilon/4)} -\zeta_n^{1-\psi_n(1+\varepsilon/2)}  \right)^2} \leq 4 \zeta_n^{-1+\psi_n}. 
\end{align*}
 The last term being summable almost surely, we use arguments similar to what were used for lower bound and complete the proof for the upper bound.

For $l_n \equiv \psi_n \equiv 1$, we observe that, for any $\kappa \in (0,1)$, we have $G_{l_n:\zeta_n} \geq G_{\lfloor \zeta_n^{1-\kappa} \rfloor :\zeta_n}$, for all large enough $n$, almost surely. Hence,
$$ \liminf_{n \to \infty} \dfrac{G_{l_n:\zeta_n}}{L\left( \log \zeta_n\right)} \geq 
\liminf_{n \to \infty} \dfrac{G_{\lfloor \zeta_n^{1-\kappa} \rfloor :\zeta_n}}{L\left( \log \zeta_n\right)}  = \liminf_{n \to \infty} \dfrac{L(\kappa \log \zeta_n)}{L(\log \zeta_n)}, \;\; \text{almost surely}.$$ The lower bound then follows from  taking $\kappa \uparrow 1$. On the otherhand, for any $\varepsilon >0$, 
\begin{align*}
\mathbb{P} \left[ G_{1:\zeta_n} \geq L\left((1+\varepsilon)\log \zeta_n \right) \Big \rvert \mathcal{F}_n \right]& \leq  \mathbb{P} \left[ \sum_{i=1}^{\zeta_n} \mathbbm{1} \left( G_{n,i} \geq L\left((1+\varepsilon)\log \zeta_n \right) \right) \geq 1 \Big \rvert \mathcal{F}_n  \right] \\
& \leq \zeta_n \bar{F} \left(L\left((1+\varepsilon)\log \zeta_n \right) \right) \\
& = \zeta_n \exp\left(-K \circ L\left((1+\varepsilon)\log \zeta_n \right) \right) \leq \zeta_n^{-\varepsilon/2},
\end{align*}
for large enough $n$, almost surely. The rest of the argument follows similarly as the previous one.
\end{proof}

\begin{lemma}
  \label{normordernewreg}
  Consider the same set-up as in \cref{normordernew}, but assume that the distribution function $F$ satisfies \cref{assumption:displacement_heavy}. Then,
     \begin{align*}
        \frac{\log G_{l_{n}:\zeta_{n}}}{\psi_n \log \zeta_{n}} \stackrel{a.s.}{\longrightarrow} \dfrac{1}{\beta}, \; \text{ as } n \to \infty.\;
     \end{align*}
  \end{lemma}

  \begin{proof}[Proof of \cref{normordernewreg}] 
  Note that, conditioned on $\mathcal{F}_n$, the random variable $\log G_{n,i}$ have distribution function given by $\widetilde{F}(x) = \mathbb{P}(\log G_{n,1} \leq x) = F(e^x)$, for all $x$. In light of \Cref{Fsat}, it is enough to prove that $\widetilde{K} := -\log (1-\widetilde{F})$ is regularly varying at $\infty$ with index $r=1$ and $\widetilde{L}(x) \sim \beta^{-1}x$ as $x \to \infty$, where $\widetilde{L}$ is the left-continuous inverse of $\widetilde{K}$.
  
  Since $1-F$ is regularly varying at $\infty$ with index $-\beta <0$, we can apply \cite[Proposition 0.8(i)]{Resnick} to conclude that $\log (1-F(x)) \sim -\beta \log x$, as $x \to \infty$. Therefore, for any $t>0$,
  $$ \dfrac{\widetilde{K}(tx)}{\widetilde{K}(x)} = \dfrac{- \log (1-F(e^{tx}))}{-\log (1-F(e^x))} \sim \dfrac{\beta tx}{\beta x} \sim t, \;\; \text{ as } x \to \infty,$$
  and hence $\widetilde{K}$ is regularly varying at $\infty$ with index $1$. On the otherhand,
  $$ \widetilde{K}(x) = - \log (1-F(e^x)) \sim \beta x, \; \text{ as } x \to \infty.$$
  As mentioned in \Cref{Fsat}, we have $\widetilde{K}(\widetilde{L}(x)) \sim x$ as $x \to \infty$; hence $x \sim \widetilde{K}(\widetilde{L}(x)) \sim  \beta \widetilde{L}(x)$. This completes the proof.
  \end{proof}

\begin{lemma}
  \label{regulargeom}
  If $h$ is regularly varying at $\infty$ with index $\rho>0$ and $a \in(0,1),$ then
  \begin{align*}
    \lim _{n \rightarrow \infty} \frac{1}{h\left(a^{-n}\right)} \sum_{i=1}^{n}
    h\left(a^{-i}\right)=\frac{1}{1-a^{\rho}}.
  \end{align*}
\end{lemma}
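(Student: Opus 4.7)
The plan is to change the summation index so that each summand becomes a ratio of the form $h(tx)/h(x)$ with a fixed factor $t$ (depending on the index) and $x = a^{-n} \to \infty$, then interchange limit and sum via dominated convergence, using Potter's bounds to produce a summable envelope.

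\textbf{Step 1 (reindexing).} I would set $k = n - i$, so that
\begin{align*}
\frac{1}{h(a^{-n})} \sum_{i=1}^{n} h(a^{-i}) = \sum_{k=0}^{n-1} \frac{h(a^{k} \cdot a^{-n})}{h(a^{-n})}.
\end{align*}
By the definition of regular variation with index $\rho$, for every fixed $k \geq 0$ the ratio on the right converges to $(a^{k})^{\rho} = a^{k\rho}$ as $n \to \infty$.

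\textbf{Step 2 (Potter bound).} Fix $\varepsilon \in (0,\rho)$. Potter's inequality (see \cite[Proposition~0.8(ii)]{Resnick}) gives a constant $C > 0$ and an $x_0$ such that, for all $x \geq x_0$ and all $t \in (0,1]$ with $tx \geq x_0$,
\begin{align*}
\frac{h(tx)}{h(x)} \leq C\, t^{\rho - \varepsilon}.
\end{align*}
Applied with $x = a^{-n}$ (which tends to infinity) and $t = a^{k} \leq 1$ for $k \geq 0$, this yields, for all $n$ large enough and uniformly in $k \geq 0$,
\begin{align*}
\frac{h(a^{k} \cdot a^{-n})}{h(a^{-n})} \leq C\, a^{k(\rho - \varepsilon)}.
\end{align*}
Since $\rho - \varepsilon > 0$ and $a \in (0,1)$, the sequence $\{C a^{k(\rho - \varepsilon)}\}_{k \geq 0}$ is summable.

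\textbf{Step 3 (dominated convergence and identification of the limit).} Viewing the sum over $k$ as an integral with respect to the counting measure on $\mathbb{N}$, the dominating sequence from Step~2 and the pointwise convergence from Step~1 allow us to exchange limit and sum:
\begin{align*}
\lim_{n \to \infty} \sum_{k=0}^{n-1} \frac{h(a^{k} \cdot a^{-n})}{h(a^{-n})} = \sum_{k=0}^{\infty} a^{k\rho} = \frac{1}{1 - a^{\rho}},
\end{align*}
which is the claimed identity. No step here is particularly delicate; the only thing to watch is to produce the dominating bound on the correct side (the summands are close to $1$ in size for $k$ near $n$, which is why regular variation with strictly positive index $\rho$ is essential: it guarantees geometric decay of the envelope in $k$ and hence summability).
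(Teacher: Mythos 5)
Your proof is correct and is essentially the paper's argument in a different packaging: the paper splits the sum into the last $m$ terms (which converge to the partial geometric sum) and a remainder controlled by a Potter-type bound derived from the Karamata representation, which is exactly the head/tail version of your dominated-convergence argument with the envelope $C a^{k(\rho-\varepsilon)}$. The only point to tighten is in Step 2: Potter's inequality requires \emph{both} arguments to exceed the threshold $x_0$, so for the finitely many indices $k$ with $n-k=i$ and $a^{-i}<x_0$ the bound is not a direct consequence of Potter; those terms contribute at most $\bigl(\sum_{i:\,a^{-i}<x_0} h(a^{-i})\bigr)/h(a^{-n})\to 0$ since $h(a^{-n})\to\infty$ for $\rho>0$, and the paper's proof handles precisely this boundary issue by treating the range $1\le x\le A_{\varepsilon}$ separately.
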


\begin{proof}
  For each fixed $m,$ we have, by regular variation
  \begin{align*}
    \lim _{n \rightarrow \infty} \frac{1}{h\left(a^{-n}\right)} \sum_{i=n-m}^{n} h\left(a^{-i}\right)
    &=\lim _{n \rightarrow \infty} \frac{1}{h\left(a^{-n}\right)} \sum_{i=n-m}^{n} h\left(a^{n-i} a^{-n}\right) \\
    &=\lim _{n \rightarrow \infty} \frac{1}{h\left(a^{-n}\right)} \sum_{i=0}^{m} h\left(a^{i} a^{-n}\right)=\sum_{i=0}^{m} a^{i \rho} \; .
  \end{align*}
  The last sum tends to $\left(1-a^{\rho}\right)^{-1}$ as $m$ tends to infinity, thus the lemma will
  be proved if we check that
  \begin{equation}
    \label{toshow}
    \lim _{m \rightarrow \infty} \lim _{n \rightarrow \infty} \frac{1}{h\left(a^{-n}\right)} \sum_{i=1}^{n-m} h\left(a^{-i}\right)=0.
  \end{equation}
  We have $h(x)=\ell(x) x^{\rho}$ with $\ell$ being a slowly varying function. Using \cite[Theorem 0.6]{Resnick}, for $x >0$,
  $\ell(x) = c(x)\exp \left( \int_{1}^x t^{-1} \xi (t) \, dt \right)$, where
  $\lim_{x \to \infty} c(x) = c \in (0,\infty)$ and $\lim_{x \to \infty} \xi(x) = 0.$ Hence, for
  every $\varepsilon>0,$ for large enough $x<y$, 
  \begin{align*}
    \frac{x^{\varepsilon}\ell(x)}{y^{\varepsilon}\ell(y)} = \frac{c(x)}{c(y)} \exp \left(-
    \int_{x}^y t^{-1} (\xi (t)+\varepsilon) \, dt \right) \leq \frac{c(x)}{c(y)} \leq \Lambda,
  \end{align*}
  for some finite constant $\Lambda$.  Thus, we can find finite constant $A_{\varepsilon} \geq 1$ such that
  for large enough $n$,
  \begin{align*}
    \sup _{1 \leq i \leq n} \frac{a^{\varepsilon(i-n)} \ell\left(a^{i-n}\right)}{a^{-\varepsilon n}
    \ell\left(a^{-n}\right)} \leq \sup _{1 \leq x \leq a^{-n}} \frac{x^{\varepsilon}
    \ell(x)}{a^{-\varepsilon n} \ell\left(a^{-n}\right)} \leq \frac{\sup _{1 \leq x \leq A_{\varepsilon}}
    x^{\varepsilon} \ell(x)}{a^{-\varepsilon n} \ell\left(a^{-n}\right)}+\Lambda.
  \end{align*}
  Thus, for $\varepsilon \in(0, \rho)$ and large $n$,
  \begin{align*}
    \sup _{1 \leq i \leq n} \frac{a^{\varepsilon(i-n)} \ell\left(a^{i-n}\right)}{a^{-\varepsilon n}
    \ell\left(a^{-n}\right)} \leq 2\Lambda,
  \end{align*}
      and
  \begin{align*}
    \frac{1}{h\left(a^{-n}\right)} \sum_{i=1}^{n-m} h\left(a^{-i}\right)
    & = \sum_{i=m}^{n-1} \frac{h\left(a^{-n} a^{i}\right)}{h\left(a^{-n}\right)}=\sum_{i=m}^{n-1}
      a^{(\rho-\varepsilon) i} \frac{a^{\varepsilon(i-n)} \ell\left(a^{i-n}\right)}{a^{-\varepsilon n} \ell\left(a^{-n}\right)} \\
    & \leq 2\Lambda \sum_{i=m}^{\infty} a^{(\rho-\varepsilon) i}=\frac{2\Lambda  a^{m(\rho-\varepsilon)}}{1-a^{\rho-\varepsilon}} \; .
  \end{align*}
  Taking $n,m \to \infty$, (\ref{toshow}) follows.
\end{proof}

\begin{lemma}{\label{cezaro}}
Take two sequence $\left\{a_n\right\}_{n \geq 1}$ and $\left\{b_n\right\}_{n \geq 1}$ of  real numbers such that  $\sum_{k=1}^n b_k \uparrow \infty.$ Then
$$ \liminf_{n \to \infty} a_n \leq \liminf_{n \to \infty} \dfrac{\sum_{i=1}^n a_ib_i}{\sum_{i=1}^n b_i} \leq \limsup_{n \to \infty} \dfrac{\sum_{i=1}^n a_ib_i}{\sum_{i=1}^n b_i} \leq \limsup_{n \to \infty} a_n.$$
\end{lemma}

\begin{proof}
The proof is a straightforward analysis exercise. 
\end{proof}

\begin{lemma}
Suppose $L$ is non-decreasing and slowly varying at $\infty$, with $L(\infty)=\infty$. Take two sequence $\left\{a_n\right\}_{n \geq 1}$ and $\left\{b_n\right\}_{n \geq 1}$ of non-negative real numbers such that $b_n \uparrow \infty$ as $n \to \infty$ and $a_n/b_n \in [\varepsilon, 1/\varepsilon]$ for some $\varepsilon >0$ and for all $n \geq 1$. Then
$$ \dfrac{\sum_{i=1}^n L(a_ib_i)}{\sum_{i=1}^n L(b_i)} \longrightarrow 1, \; \text{ as } n \to \infty.$$
\end{lemma}

\begin{proof}
Without loss of generality we can assume that $b_n >0$ and $L(b_n) \geq L(b_1) >0$ for all $n \geq 1$. Clearly, we have $\sum_{n \geq 1} L(b_n)=\infty$. In light of \cref{cezaro}, we only need to show that $L(a_nb_n)/L(b_n) \longrightarrow 1$. Monotonicity of $L$ now guarantees that $L(\varepsilon b_n) \leq L(a_nb_n) \leq L(b_n/\varepsilon)$, whereas slowly varying property implies that $L(\varepsilon b_n), L(b_n/\varepsilon) \sim L(b_n)$, as $n \to \infty$. This completes the proof.

\end{proof}




\end{document}